\def\hlinew#1{%
\noalign{\ifnum0=`}\fi\hrule \@height #1 \futurelet
\reserved@a\@xhline}
\newtheorem{theorem}{Theorem}[section]
\newtheorem{conjecture}[theorem]{Conjecture}
\newtheorem{problem}[theorem]{Problem}
\newtheorem{lemma}[theorem]{Lemma}
\begin{document}

\title{Spectral extremal graphs for the bowtie\thanks{
This paper was firstly announced in December, 2022,
and was later published on Discrete Mathematics 346 (2023) 113680.  
See \url{https://doi.org/10.1016/j.disc.2023.113680}. 
The research was supported by National Natural Science Foundation of China grant 11931002 and 12001544.
E-mail addresses: \url{505694365@qq.com} (Y\v{o}ngt\={a}o L\v{i}),
\url{lulugdmath@163.com} (L\'{u} L\v{u}),
\url{ypeng1@hnu.edu.cn} (Yu\`{e}ji\`{a}n P\'{e}ng, corresponding author).}  }

\author{Yongtao Li$^{\dag}$, Lu Lu$^{\dag}$, Yuejian Peng$^{*}$ \\[2ex]
{\small $^{\dag}$School of Mathematics and Statistics, Central South University} \\
{\small Changsha, Hunan, 410083, P.R. China} \\
{\small $^{*}$School of Mathematics, Hunan University} \\
{\small Changsha, Hunan, 410082, P.R. China }  
}

\maketitle

\begin{abstract} 
Let $F_k$ be the (friendship) graph obtained from $k$ triangles by sharing a common vertex. 
The $F_k$-free graphs of order $n$ which attain the maximal spectral radius was firstly characterized by Cioab\u{a}, Feng, Tait and Zhang [Electron. J. Combin. 27 (4) (2020)],
and later uniquely determined by Zhai, Liu and Xue [Electron. J. Combin. 29 (3) (2022)] under the condition that $n$ is sufficiently large. 
In this paper, we get rid of the condition on $n$ being sufficiently large if $k=2$. 
The graph $F_2$ is also known as the bowtie.  
We show that the unique $n$-vertex
$F_2$-free spectral extremal graph is  the balanced complete bipartite graph adding an edge in the vertex part with  smaller size if $n\ge 7$, and the condition $n\ge 7$ is tight. 
Our result is a spectral generalization of a theorem of  Erd\H{o}s, F\"{u}redi, Gould and Gunderson 
[J. Combin. Theory Ser. B 64 (1995)], which states that $\mathrm{ex}(n,F_2)=\left\lfloor {n^2}/{4} \right\rfloor +1$. 
Moreover, we study the spectral extremal problem for 
$F_k$-free graphs with given number of edges. 
In particular, we show that the
unique $m$-edge $F_2$-free spectral extremal graph is the join of $K_2$
with an independent set of $\frac{m-1}{2}$ vertices if $m\ge 8$, and the condition $m\ge 8$ is tight. 
\end{abstract}



{{\bf Key words:}  Spectral extremal problems. 
}

{{\bf 2010 Mathematics Subject Classification.}  05C50, 05C35.}

\section{Introduction}

In this paper,
we shall use the following standard notation; see, e.g., the monograph \cite{BM2008}.
We consider only simple and undirected graphs. Let $G$ be a simple
graph with vertex set $V(G)=\{v_1, \ldots, v_n\}$ and edge set $E(G)=\{e_1, \ldots, e_m\}$.
We usually write $n$ and $m$ for the number of vertices and edges
respectively.
Let $N(v)$ or $N_G(v)$ be the set of neighbors of $v$,
and $d(v)$ or $d_G(v)$ be the degree of a vertex $v$ in $G$.
For a subset $A\subseteq V(G)$, we write $e(A)$ for the
number of edges with two endpoints in $A$,
and $N(A)$ for the union of neighborhoods of vertices of $A$.
For two disjoint sets $A,B$, we write $e(A,B)$ for the number of edges between $A$ and $B$.
Let $K_n$ be the complete graph on $n$ vertices, and
$K_{s,t}$ be the complete bipartite graph with parts of sizes
$s$ and $t$.
We write  $C_n$ and $P_n$ for the cycle and
path on $n$ vertices respectively.
Let $G \vee H$ be the join graph consisting of $G$ and $H$ in which each vertex of $G$
is adjacent to each vertex of $H$.
To illustrate the background and history,
we shall partition the introduction into the following three subsections.

\subsection{The classical extremal graph problems}

A graph $G$ is called $F$-free if it does not contain
an isomorphic copy of $F$ as a subgraph.
Apparently, every bipartite graph is $C_{2k+1}$-free
for every integer $k\ge 1$.
The  Mantel theorem \cite{Man1907} asserts that
the balanced complete bipartite graph  $K_{\lfloor \frac{n}{2}\rfloor, \lceil \frac{n}{2} \rceil }$
attains the maximum number of edges among all $n$-vertex triangle-free graphs.

\begin{theorem}[Mantel \cite{Man1907}, 1907] \label{thmMan}
If $G$ is a triangle-free graph on $n$ vertices, then
\begin{equation} \label{eq-man}
e(G) \le    \lfloor {n^2}/{4} \rfloor ,
\end{equation}
equality holds if and only if  $G$ is the balanced complete bipartite graph $K_{\lfloor \frac{n}{2}\rfloor, \lceil \frac{n}{2} \rceil }$.
\end{theorem}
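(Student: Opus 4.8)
The plan is to obtain the bound \eqref{eq-man} from a single chain of inequalities anchored at a vertex of maximum degree, and then to read off the equality case by asking which links in that chain can be simultaneously tight. Let $G$ be triangle-free on $n$ vertices, set $m := e(G)$, and (for the interesting range) fix a vertex $x$ of maximum degree $\Delta$, which exists once $m\ge 1$. The key structural observation is that $A := N(x)$ is an independent set, since any edge inside $N(x)$ would span a triangle together with $x$. Writing $B := V(G)\setminus A$, it follows that every edge of $G$ has at least one endpoint in $B$, whence $m \le \sum_{u\in B} d(u)$.

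From here I would bound the right-hand side crudely: since $|B| = n-\Delta$ and $d(u)\le \Delta$ for every vertex $u$, we get $m \le (n-\Delta)\Delta$. The integer quadratic $t\mapsto (n-t)t$ is maximized over $0\le t\le n$ at $t=\lfloor n/2\rfloor$, so $(n-\Delta)\Delta \le \lfloor n/2\rfloor\lceil n/2\rceil = \lfloor n^2/4\rfloor$, which is exactly \eqref{eq-man}.

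For the equality characterization, suppose $m = \lfloor n^2/4\rfloor$, so all three inequalities above are tight. The identity $m = \sum_{u\in B} d(u)$ forces $B$ to contain no edge (otherwise every $B$--$B$ edge would be double-counted on the right), hence $B$ is independent and $G$ is bipartite with parts $A$ and $B$. The identity $\sum_{u\in B} d(u) = (n-\Delta)\Delta = |B|\cdot|A|$ then forces every vertex of $B$ to have degree exactly $\Delta = |A|$, i.e.\ to be adjacent to all of $A$, so $G = K_{|A|,|B|}$ is complete bipartite. Finally $(n-\Delta)\Delta = \lfloor n^2/4\rfloor$ forces $\{|A|,|B|\} = \{\lfloor n/2\rfloor,\lceil n/2\rceil\}$, so $G$ is the balanced complete bipartite graph; conversely $K_{\lfloor n/2\rfloor,\lceil n/2\rceil}$ is triangle-free with $\lfloor n^2/4\rfloor$ edges, completing the ``if and only if''.

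The step I expect to demand the most care is the equality analysis: one must check that the three tightness conditions are genuinely independent and that they jointly pin down the structure, and that the parity bookkeeping in $(n-\Delta)\Delta = \lfloor n^2/4\rfloor$ correctly yields balanced parts for both even and odd $n$. Everything else reduces to a short counting argument, so the bound itself is essentially immediate once the independence of $N(x)$ is in hand.
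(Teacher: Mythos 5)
Your proof is correct and complete. Note, however, that the paper itself never proves Theorem \ref{thmMan}: it is quoted as a classical result with a pointer to the literature (Mantel's 1907 paper and the standard proofs in Bollob\'as's monograph), and the only in-paper derivations of the bound (\ref{eq-man}) are as corollaries of the cited spectral theorems (via $2e(G)/n \le \lambda(G) \le \sqrt{\lfloor n^2/4\rfloor}$, resp.\ $\lambda(G)\le\sqrt{m}$), which in any case do not recover the equality characterization. So there is no in-paper argument to compare against; what you give is one of the standard elementary proofs, executed correctly. The chain $m \le \sum_{u\in B} d(u) \le (n-\Delta)\Delta \le \lfloor n^2/4\rfloor$ is valid because triangle-freeness makes $A=N(x)$ independent, and your equality analysis holds up: tightness of the first link gives $e(B)=0$ since $\sum_{u\in B}d(u)=e(A,B)+2e(B)=m+e(B)$; tightness of the second forces $d(u)=\Delta=|A|$ for every $u\in B$, and since $B$ is then independent all such neighbors lie in $A$, yielding $G=K_{|A|,|B|}$; and tightness of the third pins $\{|A|,|B|\}=\{\lfloor n/2\rfloor,\lceil n/2\rceil\}$ because $(n-t)t=n^2/4-(t-n/2)^2=\lfloor n^2/4\rfloor$ has exactly these integer solutions for both parities of $n$. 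The only cosmetic caveat is the degenerate range: for $m=0$ the bound is trivial and equality forces $n\le 1$, so your restriction to $m\ge 1$ loses nothing.
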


Mantel's theorem has many interesting applications and generalizations in the literature; see, e.g.,
\cite[pp. 294--301]{Bollobas78} for standard proofs.
The {\em Tur\'{a}n number} of a graph $F$ is the maximum number of edges  in an $n$-vertex $F$-free graph, and
it is usually  denoted by $\mathrm{ex}(n, F)$. That is,
\[ \mathrm{ex} (n,F):=\max \bigl\{ e(G): |G|=n~\text{and}~F\nsubseteq G \bigr\}. \]
A graph on $n$ vertices with no subgraph $F$ and with $\mathrm{ex}(n, F)$ edges is called an {\em extremal graph} for $F$.
We write  $\mathrm{Ex}(n,F)$ for the set of all $n$-vertex $F$-free graphs
with maximum number of edges.
A well-known  theorem of Tur\'{a}n (see \cite{Bollobas78})
extended Theorem \ref{thmMan} and
proved  that  for every integer $r\ge 2$,
\begin{equation} \label{eq-Turan}
\mathrm{Ex}(n,K_{r+1})= \{T_r(n)\},
\end{equation}
where $T_r(n)$ is an $n$-vertex complete $r$-partite graph
whose parts have sizes as equal as possible.
In particular, we have $T_2(n)=K_{\lfloor \frac{n}{2}\rfloor, \lceil \frac{n}{2} \rceil }$.
The graph $T_r(n)$ is now known as Tur\'{a}n's graph.
Inspired by Turan's result (\ref{eq-Turan}),
the extremal graph theory was widely studied and achieved a rapid development in the past 100 years; see, e.g., \cite{FS13,Sim13} for recent comprehensive surveys.

Let $F_k$ denote the $k$-fan graph, which is the graph
consisting of $k$ triangles
that intersect in exactly one common vertex.
This graph is known as the friendship graph
because it is the only extremal graph in the
well-known Friendship Theorem \cite[Chapter 43]{AZ2014}.
Since the chromatic number $\chi (F_k)=3$,
a celebrated result due to  Erd\H{o}s, Stone and Simonovits (see \cite[p. 339]{Bollobas78})
gives an asymptotic value of the Tur\'{a}n number of $F_k$, which states that
\[ \mathrm{ex}(n,F_k)= n^2/4 + o(n^2). \]
A natural question is to determine the error term
$o(n^2)$ accurately.
In 1995, Erd\H{o}s, F\"{u}redi,
Gould and Gunderson \cite{Erdos95} proved  the following
result.

\begin{theorem}[Erd\H{o}s--F\"{u}redi--Gould--Gunderson \cite{Erdos95}, 1995]
\label{thmErdos95}
For every $k \geq 1$ and $n\geq 50k^2$,
\[ \mathrm{ex}(n, F_k)= \left\lfloor \frac {n^2}{4}\right \rfloor+ \left\{
\begin{array}{ll}
k^2-k, \quad~~  \mbox{if $k$ is odd,} \\
k^2-\frac32 k, \quad \mbox{if $k$ is even}.
\end{array}
\right. \]
\end{theorem}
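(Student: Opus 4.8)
The plan is to establish matching lower and upper bounds, built on the elementary reduction that $G$ contains $F_k$ precisely when some vertex $v$ admits $k$ independent edges inside $N(v)$; thus $G$ is $F_k$-free if and only if $\nu(G[N(v)])\le k-1$ for every $v$, where $\nu$ denotes the matching number. Write $c_k$ for the claimed additive constant and set $g(k):=\max\{e(H):\nu(H)\le k-1,\ \Delta(H)\le k-1\}$, with $\Delta(H)$ the maximum degree of $H$.

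For the lower bound I would exhibit $G^{\ast}=K_{\lfloor n/2\rfloor,\lceil n/2\rceil}$ with an extremal graph $H$ attaining $g(k)$ planted inside the smaller part $P_1$, the other part $P_2$ remaining independent. For $v\in P_2$ one has $G^{\ast}[N(v)]=H$, while for $v\in P_1$ every edge of $G^{\ast}[N(v)]$ meets $N_H(v)$ because $P_2$ is independent, so $\nu(G^{\ast}[N(v)])\le d_H(v)\le k-1$; hence $G^{\ast}$ is $F_k$-free with $e(G^{\ast})=\lfloor n^2/4\rfloor+g(k)$. It then remains to evaluate $g(k)$: comparing the ``edges per unit of matching'' ratio $\binom a2/\lfloor a/2\rfloor$ of a clique $K_a$ (with $a\le k$ forced by $\Delta\le k-1$) shows odd cliques are most efficient, so for odd $k$ two disjoint copies of $K_k$ are optimal and $g(k)=k^2-k$, whereas for even $k$ two copies of $K_k$ violate $\nu\le k-1$ and the optimum is a near-$(k-1)$-regular graph on $2k-1$ vertices (which automatically satisfies $\nu\le k-1$), giving $g(k)=\lfloor(2k-1)(k-1)/2\rfloor=k^2-\tfrac32 k$; either way $g(k)=c_k$.

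For the upper bound I would induct on $n$. If some vertex $v$ has $d(v)\le\lfloor n/2\rfloor$, then deleting $v$ and using $\lfloor(n-1)^2/4\rfloor+\lfloor n/2\rfloor=\lfloor n^2/4\rfloor$ reduces matters to $G-v$; so one may assume $\delta(G)>n/2$. Fixing any $v_0$, the vertices left uncovered by a maximum matching of $G[N(v_0)]$ form an independent set $I\subseteq N(v_0)$ with $|I|\ge d(v_0)-2(k-1)>n/2-2k$. Put $B:=V(G)\setminus I$, so $|B|<n/2+2k$, and since $I$ is independent, $e(G)\le |I|\,|B|+e(B)\le\lfloor n^2/4\rfloor+e(B)$ (the product $|I||B|$ being maximised at balance). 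The whole problem is thereby reduced to proving $e(B)\le c_k$, for which it suffices to show $\nu(G[B])\le k-1$ and $\Delta(G[B])\le k-1$ and then invoke $g(k)=c_k$.

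The crux, and the step I expect to be hardest, is this local analysis. Because $I$ is independent, every $a\in I$ sends all of its $>n/2$ edges into $B$, so $a$ misses fewer than $2k$ vertices of $B$; dually, most vertices of $B$ are adjacent to almost all of $I$. The strategy is then: if $w\in B$ had $k$ neighbours in $B$, one would greedily match them to $k$ distinct common neighbours in $N(w)\cap I$, producing $k$ independent edges in $N(w)$ and hence an $F_k$ centred at $w$; and a matching of size $k$ inside $B$ could be lifted to a single $a\in I$ adjacent to all $2k$ of its endpoints, giving an $F_k$ centred at $a$. The genuine difficulty is that these liftings only work for the vertices of $B$ adjacent to almost all of $I$, so one must first \emph{clean} the structure: bound the exceptional set $B''$ of vertices with too few neighbours in $I$ (a counting argument gives $|B''|=O(k)$) and show that in a near-extremal graph $B''$ is in fact empty, so that the exact bounds $\nu(G[B]),\Delta(G[B])\le k-1$ hold. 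Controlling $B''$ together with the exact accounting is precisely what forces the hypothesis $n\ge 50k^2$, and the even/odd split of $c_k$ re-enters here through $g(k)$; tracing the equality cases throughout would then also yield uniqueness of the extremal graph.
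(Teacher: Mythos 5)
First, a point of comparison: the paper you were given does not prove this statement at all --- it is quoted there as a known theorem from [Erdos95] --- so there is no in-paper proof to measure against, and I can only assess your argument on its own terms. Your lower bound (the construction and its verification via $\nu(G^{\ast}[N(v)])\le k-1$) is correct. The genuine gap is in the upper bound, namely the reduction of everything to ``$e(B)\le c_k$, via $\nu(G[B])\le k-1$ and $\Delta(G[B])\le k-1$.'' That target is not a consequence of $F_k$-freeness, and with your choice of $B$ it fails precisely in the configurations that matter. Write $M$ for your maximum matching of $G[N(v_0)]$, $\nu=|M|\le k-1$, and $C$ for the $2\nu$ vertices it covers. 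Then $v_0\in B$, $C\subseteq B$, $v_0$ is adjacent to every vertex of $C$, and the edges of $M$ lie inside $G[B]$; hence $\Delta(G[B])\ge 2\nu$, which can be as large as $2(k-1)\ge k$, and nothing you have assumed rules this out (in the intended extremal graph every neighbourhood matching has size exactly $k-1$). The conceptual mistake is that the covered vertices belong on the \emph{same} side as $I$: in $K_{\lfloor n/2\rfloor,\lceil n/2\rceil}$ with $H$ planted in $P_1$, taking $v_0\in P_2$ gives $I\cup C=P_1$, and each vertex of $C$ is joined to all of $P_2\subseteq B$, so $G[B]$ contains a copy of $K_{2(k-1),\lceil n/2\rceil}$ and $e(B)=\Theta(kn)$, not $O(k^2)$. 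Moreover, your proposed repair --- show the exceptional set $B''$ is empty in near-extremal graphs --- is not merely difficult but impossible: if every vertex of $C$ had at least $2(k-1)\ge |C|$ neighbours in $I$, a greedy selection would match all of $C$ into $I$, producing a matching of size $2\nu>\nu$ inside $G[N(v_0)]$ and contradicting the maximality of $M$. So whenever $G[N(v_0)]$ contains even one edge --- which is forced for every choice of $v_0$ in your hard case, since $\delta(G)>n/2$ gives every neighbourhood minimum degree at least $2\delta(G)-n\ge 1$ --- some vertex of $C$ has fewer than $2(k-1)$ neighbours in $I$, i.e.\ $B''\neq\varnothing$.

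Two further problems are less central but real. Your downward induction has no base: at $n=50k^2$ you cannot delete a low-degree vertex, because the formula is unavailable (indeed only conjectured) below $50k^2$, so the threshold case with $\delta(G)\le\lfloor n/2\rfloor$ is unhandled. And the identity $g(k)=c_k$ that your endgame invokes is the Chv\'{a}tal--Hanson theorem on the maximum number of edges given $\nu\le k-1$ and $\Delta\le k-1$; your ``edges per unit of matching'' comparison among cliques is a heuristic for the lower bound on $g(k)$, not a proof of the upper bound $g(k)\le c_k$ (the theorem is citable, and it is in fact an ingredient of the published proof). For the record, the argument in [Erdos95] does not hinge on a single-vertex neighbourhood partition: it starts from Erd\H{o}s's book theorem (a graph with more than $n^2/4$ edges has an edge lying in at least $n/6$ triangles), exploits $F_k$-freeness at every vertex to build a global bipartition in which the analogue of your $C$ lands on the correct side, and only then applies the Chv\'{a}tal--Hanson count to the genuinely surplus edges; that global step is exactly what your sketch is missing.
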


The extremal graphs in $\mathrm{Ex}(n,F_k)$ are also determined in \cite{Erdos95} as follows.
\begin{itemize}
\item[(a)] For odd $k$,
the extremal graphs are constructed from $K_{\lfloor \frac{n}{2}\rfloor, \lceil \frac{n}{2} \rceil }$
by embedding two vertex-disjoint copies of the complete graph $K_k$ in one side.

\item[(b)]
For even $k$, the extremal graphs
are obtained by taking $K_{\lfloor \frac{n}{2}\rfloor, \lceil \frac{n}{2} \rceil }$ and embedding
a graph with $2k-1$ vertices, $k^2-\frac{3}{2} k$ edges and maximum degree $k-1$ in one side.
\end{itemize}

We remark here that for even $k$, the embedded subgraph is a nearly $(k-1)$-regular graph
of order $2k-1$ with degree
sequence $(k-1,\ldots ,k-1,k-2)$. Such a graph does exist for every even $k\ge 2$.
Moreover, for odd $k$ and even $n$,
the extremal graph is unique, it is obtained from
$K_{\frac{n}{2},\frac{n}{2}}$ by embedding two vertex-disjoint copies of $K_k$ in one part.
However, for other cases,
the extremal graph for $F_k$ is clearly not unique.
Additionally, it was conjectured by Erd\H{o}s, F\"{u}redi,
Gould and Gunderson
in the second paragraph after \cite[Theorem 2.1]{Erdos95} that
Theorem \ref{thmErdos95} gives $\mathrm{ex}(n,F_k)$
for all $n\ge 4k$, rather than $n\ge 50k^2$.
This conjecture remains open.

\subsection{The spectral extremal graph problems}

Let $G$ be a graph on $n$ vertices with $m$ edges.
Let $A(G)$ be the adjacency matrix of $G$.
Let $\lambda (G)$ be the spectral radius of $G$,
which is defined as the maximum of
modulus of eigenvalues of the adjacency matrix $A(G)$.
The classical extremal graph problems
usually study the maximum or minimum
number of edges that the extremal graphs can have.
Correspondingly,
we can study the spectral extremal  problem.
We denote by $\mathrm{ex}_{\lambda}(n,F)$
the largest eigenvalue of the adjacency matrix
in an $n$-vertex  graph that contains no copy of $F$.

In 2007, Nikiforov  \cite{Niki2007laa2} published the spectral Tur\'{a}n theorem, which asserts that
the $r$-partite Tur\'{a}n graph $T_r(n)$ is the unique $K_{r+1}$-free graph
attaining the maximum spectral radius.
In particular, the important triangle case provided a spectral version of  Theorem \ref{thmMan}.

\begin{theorem}[Nikiforov \cite{Niki2007laa2}, 2007] \label{thm-niki}
Let $G$ be a  triangle-free graph on $n$ vertices. Then
\begin{equation} \label{eq2}
\lambda (G)\le \lambda (K_{\lfloor \frac{n}{2}\rfloor, \lceil \frac{n}{2} \rceil } ),
\end{equation}
equality holds if and only if $G $
is a balanced complete bipartite graph
$K_{\lfloor \frac{n}{2}\rfloor, \lceil \frac{n}{2} \rceil }$.
\end{theorem}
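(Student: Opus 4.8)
The plan is to derive the inequality from a general spectral bound involving only vertex degrees, and then to feed in the single structural fact that triangle-freeness provides. The key auxiliary statement I would isolate is the classical degree bound
\[
\lambda(G) \le \max_{uv \in E(G)} \sqrt{d(u)\,d(v)} .
\]
Granting this, I would observe that if $G$ is triangle-free then every edge $uv$ satisfies $N(u) \cap N(v) = \emptyset$, since a common neighbour of $u$ and $v$ would close a triangle. Hence $d(u) + d(v) = |N(u) \cup N(v)| \le n$. As $d(u)$ and $d(v)$ are positive integers whose sum is at most $n$, their product is maximised when $\{d(u), d(v)\} = \{\lfloor n/2 \rfloor, \lceil n/2 \rceil\}$, so $d(u)d(v) \le \lfloor n^2/4 \rfloor$. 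Combining the two displays gives $\lambda(G)^2 \le \lfloor n^2/4 \rfloor$. Since the spectral radius of $K_{s,t}$ is $\sqrt{st}$, we have $\lambda(K_{\lfloor \frac n2\rfloor, \lceil \frac n2\rceil}) = \sqrt{\lfloor n/2\rfloor \lceil n/2\rceil} = \sqrt{\lfloor n^2/4\rfloor}$, which is exactly the bound (\ref{eq2}).

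To prove the auxiliary degree bound I would pass to a connected component realising $\lambda(G)$ (isolated vertices and components of smaller spectral radius only decrease the left side while not increasing the right), and take its positive Perron eigenvector $\mathbf{x}$. Choose $u$ with $x_u = \max_i x_i$ and a neighbour $w$ of $u$ with $x_w = \max_{v \sim u} x_v$. From $A\mathbf{x} = \lambda \mathbf{x}$ one reads off $\lambda x_u = \sum_{v \sim u} x_v \le d(u)\, x_w$ and $\lambda x_w = \sum_{v \sim w} x_v \le d(w)\, x_u$; multiplying these and cancelling $x_u x_w > 0$ yields $\lambda^2 \le d(u) d(w) \le \max_{uv \in E} d(u)d(v)$, as required.

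The step I expect to be most delicate is the equality characterisation. Equality in (\ref{eq2}) forces equality throughout the chain above: $\lambda^2 = d(u)d(w)$ with $d(u) + d(w) = n$ and $\{d(u), d(w)\} = \{\lfloor n/2\rfloor, \lceil n/2\rceil\}$, and in addition $x_v = x_w$ for every $v \in N(u)$ and $x_v = x_u$ for every $v \in N(w)$. Writing $A = N(w)$ and $B = N(u)$, the conditions $N(u) \cap N(w) = \emptyset$ and $d(u) + d(w) = n$ say that $A$ and $B$ partition $V(G)$; being neighbourhoods in a triangle-free graph, $A$ and $B$ are independent sets, so $G$ is bipartite with parts $A, B$. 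The eigenvector then takes only the two values $x_u$ on $A$ and $x_w$ on $B$, so for $z \in A$ we get $\lambda x_u = d(z)\, x_w$, forcing every vertex of $A$ to have the same degree $\deg(A)$, and likewise every vertex of $B$ to have degree $\deg(B)$; multiplying $\lambda x_u = \deg(A) x_w$ by $\lambda x_w = \deg(B) x_u$ gives $\lambda^2 = \deg(A)\deg(B)$. Since $\deg(A) \le |B|$ and $\deg(B) \le |A|$, and $\lambda^2 = d(u)d(w) = |B|\,|A|$, equality forces $\deg(A) = |B|$ and $\deg(B) = |A|$, so $G$ is the complete bipartite graph on $A \cup B$, balanced because $\{|A|,|B|\} = \{\lfloor n/2\rfloor, \lceil n/2\rceil\}$. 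The converse is immediate from $\lambda(K_{\lfloor \frac n2\rfloor, \lceil \frac n2\rceil}) = \sqrt{\lfloor n^2/4\rfloor}$. The points requiring genuine care are ruling out extra components (here one uses that $\lfloor t^2/4\rfloor$ is strictly increasing in the integer $t$, so equality demands the extremal component to span all $n$ vertices) and verifying that the two-valued structure of $\mathbf{x}$ really propagates to biregularity before the completeness argument can be applied.
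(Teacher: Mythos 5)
Your proof is correct, and it is genuinely different from what the paper does: the paper never proves Theorem \ref{thm-niki} at all, but quotes it from Nikiforov and merely remarks (in Section 1.3) that it can be deduced from Nosal's theorem (Theorem \ref{thmnosal}) combined with Mantel's theorem, via $\lambda(G) \le \sqrt{m} \le \sqrt{\lfloor n^2/4 \rfloor} = \lambda\bigl(K_{\lfloor \frac{n}{2}\rfloor, \lceil \frac{n}{2}\rceil}\bigr)$. Your route instead rests on the classical edge bound $\lambda(G) \le \max_{uv \in E(G)} \sqrt{d(u)d(v)}$, with triangle-freeness entering only through $N(u) \cap N(v) = \emptyset$, hence $d(u)+d(v) \le n$, for every edge $uv$. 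What your approach buys is self-containedness: everything reduces to Perron--Frobenius bookkeeping, and the equality analysis (connectivity via strict monotonicity of $\lfloor t^2/4\rfloor$, the two-valued eigenvector, biregularity, and then completeness from $\deg(A)\deg(B) = |A|\,|B|$ with $\deg(A)\le|B|$, $\deg(B)\le|A|$) is carried out correctly and in full. What the paper's route buys is brevity, but only modulo a cited black box: the equality case there leans on Nosal's equality characterization ($\lambda = \sqrt{m}$ iff $G$ is complete bipartite), which itself requires an argument of comparable depth to yours; one then still has to observe that a complete bipartite $K_{a,b}$ with $ab = \lfloor n^2/4\rfloor$ and $a+b \le n$ must be balanced and spanning, which is exactly the integer optimization you perform explicitly. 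The only cosmetic caveat in your write-up is the phrase ``$\lfloor t^2/4\rfloor$ is strictly increasing in the integer $t$,'' which is true only for $t \ge 1$ ($t=0$ and $t=1$ both give $0$); this is harmless, since the relevant components in the equality case contain an edge and the comparison is against $n \ge 2$.
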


Comparing the spectral result with the classical
result in terms of edges,
one can see that Theorem \ref{thm-niki}
is stronger than Theorem \ref{thmMan},
since
(\ref{eq2}) can deduce (\ref{eq-man}). Indeed, using (\ref{eq2}) and
Rayleigh's formula, we get  ${2e(G)}/{n} \le \lambda (G) \le \sqrt{\lfloor n^2/4\rfloor}$ and then $e(G) \le \bigl\lfloor \frac{n}{2}\sqrt{\lfloor n^2/4\rfloor} \bigr\rfloor = \lfloor n^2/4\rfloor$.
Such a deduction enriches the study of spectral extremal graph theory.
In the past few years,
Theorem \ref{thm-niki} stimulated the developments of the spectral extremal graph theory.
It is natural to consider  the spectral extremal  problems
for graphs with given number of vertices.
In view of this perspective,
various extensions and generalizations on
inequality (\ref{eq2})  have been obtained in the literature; see, e.g.,
\cite{Wil1986,Niki2007laa2,KN2014} for extensions
on $K_{r+1}$-free graphs;
see \cite{BN2007jctb} for relations
between cliques and spectral radius,
\cite{TT2017,LN2021outplanar} for outerplanar and planar graphs,
\cite{Tait2019} for the Colin de Verdi\`{e}re parameter,
\cite{LNW2021,LP2022second} for non-bipartite triangle-free graphs,
\cite{LG2021,LSY2022} for non-bipartite graphs without short odd cycles,
and \cite{NikifSurvey} for a comprehensive survey.

\medskip

Recall that $F_k$ is the graph obtained from $k$ triangles
by intersecting a common vertex,
and $\mathrm{Ex}(n,F_k)$ is the set of $n$-vertex $F_k$-free graphs
with the maximum number of edges.
For fixed $k\ge 2$ and sufficiently large order $n$,
the spectral extremal problem
for $F_k$ was recently
characterized  by Cioab\u{a}, Feng,
Tait and Zhang \cite{CFTZ20}.

\begin{theorem}[Cioab\u{a}--Feng--Tait--Zhang \cite{CFTZ20}, 2020]    \label{thmCFTZ20}
Let $k\ge 2$ and $G$ be an $F_k$-free graph on $n$ vertices.
For  sufficiently large $n$, if $G$ has the maximal spectral radius, then
\[  G \in  \mathrm{Ex}(n,F_k). \]
\end{theorem}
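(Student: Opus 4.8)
The plan is to pin down the exact structure of the spectral extremal graph $G$ and recognise it as one of the edge-extremal graphs listed after Theorem~\ref{thmErdos95}; since those are exactly the members of $\mathrm{Ex}(n,F_k)$, this gives $G\in\mathrm{Ex}(n,F_k)$. The starting point is the exact local description of $F_k$-freeness: a graph contains $F_k$ if and only if some vertex has a matching of size $k$ inside its neighbourhood, so $G$ is $F_k$-free if and only if $\nu(G[N(v)])\le k-1$ for every vertex $v$. By the Erd\H{o}s--Gallai theorem this gives $e(G[N(v)])=O(kn)$ for every $v$. Let $\mathbf{x}$ be the Perron eigenvector of $A(G)$, normalised so that $\max_v x_v=x_z=1$, and write $\lambda=\lambda(G)$. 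Since any edge-extremal graph from Theorem~\ref{thmErdos95} is $F_k$-free and contains $K_{\lfloor n/2\rfloor,\lceil n/2\rceil}$ as a spanning subgraph, maximality of $G$ yields $\lambda\ge\sqrt{\lfloor n^2/4\rfloor}$, so $\lambda^2\ge\lfloor n^2/4\rfloor$.

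Next I would extract the rough structure. Put $B=N(z)$ and $A=V\setminus N(z)$, so that $z\in A$ is joined to all of $B$. From $\lambda x_z=\sum_{u\sim z}x_u\le d(z)$ we get $d(z)\ge\lambda$, and counting walks of length two out of $z$ gives
\[
\lambda^2=\sum_{u\in B}\sum_{w\sim u}x_w\le d(z)+2e(G[B])+e\bigl(B,\,A\setminus\{z\}\bigr).
\]
Combining this with $e(B,A\setminus\{z\})\le|B|(|A|-1)=d(z)(n-1-d(z))\le n^2/4$, with $e(G[B])=O(kn)$, and with the lower bound $\lambda^2\ge\lfloor n^2/4\rfloor$, forces $d(z)=n/2+O(\sqrt{kn})$ and, crucially, $e(B,A\setminus\{z\})\ge n^2/4-O(kn)$. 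Thus almost every pair between $A$ and $B$ is an edge: $(A,B)$ is a nearly complete bipartite pair with parts of size $n/2+o(n)$ and only $O(kn)$ missing cross edges, while all within-part edges together number $O(kn)$.

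The bulk of the argument is upgrading this to the exact structure. The matching characterisation, applied to vertices of $A$ and of $B$ and combined with the near-completeness of the cross edges, shows that every vertex has $O(k)$ neighbours inside its own part, so the within-part edges form a bounded-degree pattern. Using the nearly complete bipartite structure one shows that $\mathbf{x}$ is nearly constant on each of $A$ and $B$ and is bounded below by a positive constant there; these estimates are what make local modifications usable. A case analysis then certifies, for every non-edge, an $F_k$-free modification of $G$ (either adding that edge, or a swap that deletes a within-part edge and inserts a cross edge) which strictly increases $\lambda$ unless the edge is already present in the ``correct'' place. This forces the cross graph to be the complete bipartite graph $K_{\lfloor n/2\rfloor,\lceil n/2\rceil}$ and the within-part edges to lie on a single side. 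Once the cross graph is complete, $F_k$-freeness is exactly equivalent to the within-part graph $H$ satisfying $\Delta(H)\le k-1$ and $\nu(H)\le k-1$: a vertex of the opposite part sees all of $H$ in its neighbourhood (forcing $\nu(H)\le k-1$), while a vertex incident to $H$ can match its $H$-neighbours into the large opposite part (forcing $\Delta(H)\le k-1$). Because $\mathbf{x}$ is nearly constant on the side carrying $H$, for $n$ large $\lambda$ is to leading order increasing in $e(H)$, so maximality of $\lambda$ makes $e(H)$ as large as possible subject to $\Delta(H),\nu(H)\le k-1$. This is precisely the Erd\H{o}s--F\"{u}redi--Gould--Gunderson optimisation, whose maximum is $k^2-k$ for odd $k$ and $k^2-\tfrac{3}{2}k$ for even $k$. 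Hence $e(G)=\lfloor n^2/4\rfloor+e(H)=\mathrm{ex}(n,F_k)$, and since $G$ is $F_k$-free we conclude $G\in\mathrm{Ex}(n,F_k)$.

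The hard part is the exact step in the third paragraph, and it is where the hypothesis that $n$ is large is really spent. The difficulty is that $F_k$ is not monotone under the natural operations: completing a missing cross edge $vw$ can create a new copy of $F_k$ through a common neighbour $c$ whose neighbourhood already carries a matching of size $k-1$, so no edge-addition is automatically safe. One must therefore use the quantitative eigenvector estimates to show that the dangerous configurations cannot occur in a near-extremal graph, and otherwise design a compensating edge-swap that is simultaneously $F_k$-free and strictly $\lambda$-increasing. Getting these local exchange arguments to cover all cases --- and separating the odd and even $k$ branches in the concluding extremal count --- is the technical heart of the proof.
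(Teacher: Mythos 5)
A point of orientation first: the paper itself contains no proof of Theorem \ref{thmCFTZ20}. It is quoted from Cioab\u{a}--Feng--Tait--Zhang, and the authors' only comment on its proof is that it uses the triangle removal lemma, which is precisely why $n$ must be sufficiently large there. Your outline deliberately avoids the removal lemma and instead extracts the approximate structure from walk counting plus the Erd\H{o}s--Gallai bound on $e(G[N(v)])$. That skeleton is sound: the lower bound $\lambda^2\ge\lfloor n^2/4\rfloor$, the inequality $\lambda^2\le d(z)+2e(G[B])+e(B,A\setminus\{z\})$, and the conclusions $d(z)=n/2+O(\sqrt{kn})$ and ``only $O(kn)$ missing cross pairs'' are all correct, and this route is in fact closer to the later removal-lemma-free treatments (Zhai--Liu--Xue, and this paper's own proof of the $k=2$ case in Theorem \ref{thm-n-F2}) than to the cited original. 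Note, however, that even in this part the claim that \emph{all} within-part edges number $O(kn)$ silently uses $\mathrm{ex}(n,F_k)=n^2/4+O(k^2)$ (Theorem \ref{thmErdos95}) to bound $e(A)$: Erd\H{o}s--Gallai only controls $e(B)=e(G[N(z)])$.

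The genuine gaps are in the upgrade from approximate to exact structure. First, the inference ``matching characterisation $+$ near-completeness of the cross pair $\Rightarrow$ every vertex has $O(k)$ neighbours inside its own part'' is invalid as stated, because aggregate near-completeness gives no per-vertex control. Concretely, modify $K_{n/2,n/2}$ with parts $X,Y$ by deleting all cross edges at one vertex $v\in X$ and joining $v$ to all of $X\setminus\{v\}$ instead: this changes only $O(n)$ pairs, the resulting graph is triangle-free (hence $F_k$-free), yet $v$ has $n/2-1$ neighbours inside its own part. To run your matching argument at $v$ you need $v$ itself to have nearly full cross degree, and ruling out such defective vertices requires spectral extremality --- a minimum-(weighted-)degree or vertex-deletion lemma of the kind proved separately in the literature --- which your proposal never formulates. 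Second, and more seriously, the step you yourself call the technical heart --- the case analysis producing, for every missing cross edge or misplaced internal edge, an $F_k$-free and strictly $\lambda$-increasing modification, together with the derivation that the bipartition is balanced --- is declared rather than executed. Everything after it is fine (the equivalence of $F_k$-freeness with $\Delta(H)\le k-1$ and $\nu(H)\le k-1$ once the cross graph is complete, and the reduction to the Erd\H{o}s--F\"{u}redi--Gould--Gunderson optimisation are both correct), but it all depends on that exact structure. As written, this is a plausible road map whose hardest segments are missing, not a proof.
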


In 2022, Zhai, Liu and Xue \cite{ZLX2022} provided a {\it further}  characterization
of $G$ and
determined  the {\it unique} spectral extremal graph,
which is obtained from $K_{\lfloor \frac{n}{2}\rfloor, \lceil \frac{n}{2} \rceil }$ by embedding a graph $H$ in the part of smaller size
$\lfloor \frac{n}{2}\rfloor$, where $H=K_k \cup K_k$ for odd $k$, and $H$ is described as below for even $k$.
In this case, $H$ is a graph with vertex set $V(H)=\{w_0\} \cup A \cup B$
such that $N(w_0)=A$ and $|B|=|A| +2=k$.
Then we partition $A$ into $A_1\cup A_2$,
and $B$ into $\{u_0\}\cup B_1\cup B_2$
such that $|A_1|=|A_2|=|B_2|=\frac{k-2}{2}$ and $|B_1|=\frac{k}{2}$.
Finally, we join $k-1$ edges from $u_0$ to $A_1\cup B_1$,
$\frac{k-2}{2}$ independent edges between $B_2$ and $A_2$,
and some additional edges such that both $A$ and $B_1\cup B_2$
are cliques.
In fact, it is challenging and difficult to
characterize  the embedded small graph $H$ in the even case.
We refer the readers to \cite{ZLX2022} for more details, and
\cite{Chen03,LP2021,DKLNTW2021} for related problems,
and \cite{WKX2023} for a stronger result.

\begin{theorem}[Wang--Kang--Xue \cite{WKX2023}, 2023] \label{thm-WKX}
Let $r\ge 2$ be an integer, and $F$ be a graph with $\mathrm{ex}(n,F)= e(T_r(n)) + O(1)$. For sufficiently large $n$, if $G$ has the maximal spectral radius
over all $n$-vertex $F$-free graphs, then
\[  G\in \mathrm{Ex}(n,F). \]
\end{theorem}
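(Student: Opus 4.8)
The plan is to prove this ``spectral extremal equals edge extremal'' statement by a stability argument followed by a spectral refinement. Let $G$ be an $n$-vertex $F$-free graph of maximum spectral radius, write $\lambda=\lambda(G)$, and let $\mathbf{x}$ be a Perron eigenvector normalized so that $\max_{v}x_v=x_u=1$. The hypothesis $\mathrm{ex}(n,F)=e(T_r(n))+O(1)$ forces $\chi(F)=r+1$ through the Erd\H{o}s--Stone--Simonovits theorem, so $T_r(n)$, being $r$-colorable, is $F$-free. By the maximality of $G$ we then have $\lambda \ge \lambda(T_r(n)) = \left(1-\tfrac1r\right)n - O(1)$. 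This is the starting inequality that drives the whole argument.

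First I would show that $G$ is \emph{globally} close to $T_r(n)$. Because $\chi(F)=r+1$, the graph $F$ embeds into a balanced blow-up $K_{r+1}(t)$ for some fixed $t$; hence an $F$-free $G$ cannot contain $\Omega(n^{r+1})$ copies of $K_{r+1}$, since supersaturation would otherwise produce $K_{r+1}(t)\supseteq F$. The graph removal lemma then lets me delete $o(n^2)$ edges to obtain a $K_{r+1}$-free graph $G'$ with $\lambda(G')\ge \lambda-o(n)$, and Nikiforov's spectral Tur\'an inequality $\lambda(G')^2\le 2\left(1-\tfrac1r\right)e(G')$ yields $e(G)\ge e(T_r(n))-o(n^2)$. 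With this many edges, the Erd\H{o}s--Simonovits stability theorem provides a partition $V(G)=V_1\cup\cdots\cup V_r$ for which the number of edges inside the parts plus the number of non-edges between the parts is $o(n^2)$; that is, $G$ differs from $T_r(n)$ in only $o(n^2)$ places.

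Second, and this is the technically heaviest step, I would upgrade this $o(n^2)$ closeness to an \emph{exact} structural description, using the Perron eigenvector together with the maximality of $G$. The eigenvalue equation $\lambda x_v=\sum_{w\sim v}x_w$ forces the maximum-weight vertex to have degree at least $\lambda$, and a bootstrapping argument should propagate this to show that all but $O(1)$ vertices carry eigenvector weight $1-o(1)$ and have degree $\left(1-\tfrac1r\right)n-o(n)$. The central tool is a \emph{switching argument}: if some vertex $v$ were non-adjacent to a high-weight vertex in another part, or adjacent to a low-weight vertex, one could delete and insert edges so as to strictly increase the Rayleigh quotient $\mathbf{x}^{\top}A(G)\mathbf{x}/(\mathbf{x}^{\top}\mathbf{x})$ while preserving $F$-freeness, contradicting maximality. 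Carrying this out I expect to prove that the parts $V_i$ are balanced up to $O(1)$, that $G$ is complete between every pair of parts, and that only $O(1)$ edges lie inside the parts --- in other words, $G$ is obtained from a nearly balanced complete $r$-partite graph by embedding a bounded ``defect'' graph inside the parts.

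Finally, once $G$ has this shape, maximizing $\lambda$ subject to $F$-freeness reduces to a finite optimization: since adding any cross-edge or internal edge that keeps the graph $F$-free strictly increases $\lambda$, the extremal $G$ must realize the maximum possible number of edges, namely $e(T_r(n))+O(1)=\mathrm{ex}(n,F)$, whence $G\in\mathrm{Ex}(n,F)$. I expect the genuine obstacle to be the refinement step: controlling the $O(1)$ exceptional low-weight vertices, which cannot simply be discarded, keeping the eigenvector estimates uniform as the switching is iterated, and ensuring that every local switch respects $F$-freeness. This last constraint is exactly what ties the spectral optimization to the combinatorial structure of $F$, and it is where the argument is most delicate.
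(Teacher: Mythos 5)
A preliminary point: the paper does not prove this statement at all --- it is quoted as background (Theorem 1.5, attributed to Wang--Kang--Xue, J. Combin. Theory Ser. B 159 (2023) 20--41), so there is no in-paper proof to compare yours against; I can only judge your proposal on its own merits. Its first half is sound and standard: the hypothesis $\mathrm{ex}(n,F)=e(T_r(n))+O(1)$ together with Erd\H{o}s--Stone--Simonovits forces $\chi(F)=r+1$, so $T_r(n)$ is $F$-free and $\lambda(G)\ge \lambda(T_r(n))\ge (1-\tfrac1r)n-O(1)$; supersaturation plus the clique removal lemma, the subadditivity estimate $\lambda(G)\le \lambda(G')+\sqrt{2\,o(n^2)}$, and Nikiforov's inequality $\lambda(G')^2\le 2(1-\tfrac1r)e(G')$ for $K_{r+1}$-free graphs indeed give $e(G)\ge e(T_r(n))-o(n^2)$, after which Erd\H{o}s--Simonovits stability places $G$ within edit distance $o(n^2)$ of $T_r(n)$. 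This is the same opening used throughout this literature (Cioab\u{a}--Feng--Tait--Zhang and its successors).

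The gap lies in the two places where the theorem's real difficulty lives. First, your entire refinement step --- promoting $o(n^2)$-closeness to ``complete $r$-partite with near-balanced parts plus an $O(1)$ defect,'' while controlling the exceptional low-weight vertices and keeping every switching $F$-free --- is only announced (``I expect to prove\ldots''), and this is the bulk of the actual proof, not a routine appendix. Second, and decisively, your closing inference is invalid as stated. Maximality of $\lambda(G)$ only tells you that no edge can be added to $G$ itself while preserving $F$-freeness, i.e., that $G$ is a \emph{saturated} $F$-free graph; saturation does not imply $e(G)=\mathrm{ex}(n,F)$ (the star $K_{1,n-1}$ is a saturated triangle-free graph with $n-1$ edges, far below $\lfloor n^2/4\rfloor$). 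Since the spectral radius is not monotone in the edge count across \emph{different} graphs, ``$G$ maximizes $\lambda$'' does not reduce to ``$G$ maximizes $e$'' even after the structure is pinned down: what is needed is the comparison statement that if $e(G)\le \mathrm{ex}(n,F)-1$, then some $F$-free graph of the restricted shape has strictly larger spectral radius than $G$. Proving that comparison requires the quantitative eigenvector estimates (all entries $1-O(1/n)$, degree bounds for every vertex, and a careful accounting of how each missing cross-edge or extra internal edge perturbs the Rayleigh quotient) which your sketch defers; the paper's own Theorem 1.7 and Lemma 2.2 illustrate, even in the simplest case $F=F_2$, how delicate the final comparison between candidate configurations is (the added edge must sit in the \emph{smaller} part). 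Until that step is carried out, the conclusion $G\in\mathrm{Ex}(n,F)$ does not follow from your argument.
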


\medskip
Note that the above results
(Theorems \ref{thmCFTZ20} and \ref{thm-WKX}) hold under the condition that $n$ is sufficiently large.
Nevertheless, the lower bound on $n$ is still unknown.
For example, given a specific integer $n$, say $n=10^8$,
we still do not know whether the conclusion is true.
In extremal combinatorics,
it is also an important work to determine the extremal configurations for all values of $n$,
instead of $n$ being large enough. For instance,
the following are three such works.

\begin{itemize}

\item[$\heartsuit$]
In 2005, F\"{u}redi and Simonovits \cite{FS2005},
Keevash and Sudakov \cite{KS2005}  independently
provided the Tur\'{a}n number of Fano plane for sufficiently large $n$.
In 2019,  Bellmann and Reiher \cite{BR2019} provided a complete solution
to the Tur\'{a}n number of Fano plane for every integer $n\ge 7$.
This confirmed a conjectura of S\'{o}s in a strong sense.

\item[$\heartsuit$]
Moreover, Tait and Tobin \cite{TT2017} proved that for sufficiently large $n$,
the outerplanar graph on $n$ vertices of
maximum spectral radius is $K_1\vee P_{n-1}$,
the join of a vertex and a path on $n-1$ vertices.
In 2021, Lin and Ning \cite{LN2021outplanar} proved further that
the same result still holds for all $n\ge 2$ except for $n=6$.
This gives a complete solution to the Cvetkovi\'{c}--Rowlinson conjecture.

\item[$\heartsuit$] 
In addition, Nikiforov \cite{Niki2009ejc}
proved that for every color-critical graph $F$ with 
$\chi (F)=r+1$, there exists  $n_0(F)$
such that if $n\ge n_0(F)$,
then the Tur\'{a}n graph $T_r(n)$
is the unique graph attaining the maximum spectral radius
among all $n$-vertex $F$-free graphs.
The bound $n_0(F)$ given in \cite{Niki2009ejc} is an exponential function on the order of $F$.
In 2022, Zhai and Lin \cite{ZL2022jgt} provided that
linear functions on $n_0(F)$ are valid for two  classes of graphs, namely,
books and theta graphs.
\end{itemize}

To some extent, it is meaningful to find an appropriate or smallest bound $n_0(k)$
such that for every $n\ge n_0(k)$, the spectral extremal graphs
are contained in  $ \mathrm{Ex}(n,F_k)$.

\begin{figure}[H]
\centering
\includegraphics[scale=0.12]{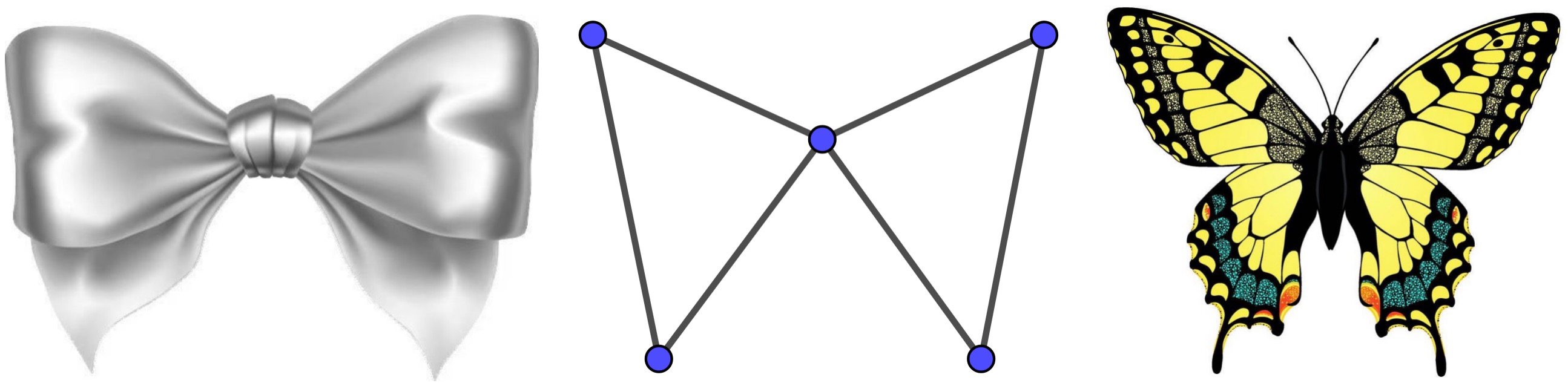} 
\caption{The bowtie graph $F_2$.}
\label{Fig-bowtie}
\end{figure}

In this paper, we investigate  such problems for
the bowtie $F_2$, which consists of two copies of $K_3$
merged at a vertex. We refer to this vertex as the
central vertex of the bowtie.
Despite the simple nature of the bowtie, bowtie-free graphs have been crucial in several areas of graph theory,
such as in Ramsey problem \cite{HN2018},
supersaturation problem \cite{KMP2020}
and chromatic number \cite{CCCFL2021}, etc.
In fact, the Tur\'{a}n number of the bowtie
was also considered by
Erd\H{o}s, F\"{u}redi, Gould and Gunderson \cite{Erdos95},
in which they presented the  exact value of $\mathrm{ex}(n,F_2)$ for every integer $n\ge 5$.

\begin{theorem}[Erd\H{o}s et al. \cite{Erdos95}, 1995] \label{EFGG-F2}
For $n\ge 5$,  one has
\[  \mathrm{ex}(n,F_2) = \left\lfloor {n^2}/{4} \right\rfloor +1. \]
\end{theorem}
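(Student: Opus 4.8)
The plan is to prove the two bounds separately. For the lower bound I take the balanced complete bipartite graph $K_{\lfloor n/2\rfloor,\lceil n/2\rceil}$ and add one edge $ab$ inside a part. Any triangle of the resulting graph must use the unique intra-part edge $ab$ together with a common neighbour in the other part, so \emph{all} triangles share the edge $ab$; hence no two triangles meet in exactly one vertex, the graph is $F_2$-free, and this already gives $\mathrm{ex}(n,F_2)\ge \lfloor n^2/4\rfloor+1$. For the upper bound the key reformulation is that $G$ is $F_2$-free if and only if, for every vertex $v$, the subgraph induced on $N(v)$ has no matching of size two; equivalently the edges inside $N(v)$ pairwise intersect, so $G[N(v)]$ is a star or a triangle (together with possibly some isolated vertices).

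I would then argue by strong induction on $n$, checking the base case $n=5$ directly (if a vertex has degree at most $2$ one deletes it and rules out $G-v=K_4$ by exhibiting a bowtie). In the inductive step, if $G$ has a vertex $v$ with $d(v)\le \lfloor n/2\rfloor$, I delete it: deleting a vertex preserves $F_2$-freeness, so the induction hypothesis together with the identity $\lfloor n^2/4\rfloor-\lfloor (n-1)^2/4\rfloor=\lfloor n/2\rfloor$ gives $e(G)\le e(G-v)+\lfloor n/2\rfloor\le \lfloor (n-1)^2/4\rfloor+1+\lfloor n/2\rfloor=\lfloor n^2/4\rfloor+1$.

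The remaining case $\delta(G)\ge \lfloor n/2\rfloor+1$ is where the neighbourhood structure must be exploited, and it is the main obstacle. First I exclude triangular neighbourhoods: if $G[N(v)]$ contains a triangle $xyz$, then $\{v,x,y,z\}$ spans a $K_4$, and the matching condition applied in each of these four neighbourhoods forces every outside neighbour of one of these vertices to be nonadjacent to the other three; the four sets of outside neighbours are then pairwise disjoint subsets of the remaining $n-4$ vertices, each of size at least $\delta-3$, so $4(\delta-3)\le n-4$, which is impossible once $n\ge 6$. Hence every $G[N(v)]$ is a star. Fixing $v$ and letting $I$ be $N(v)$ minus a star centre, the set $I$ is independent with $|I|\ge \lfloor n/2\rfloor$, and each $u\in I$ has all its neighbours outside $I$, so $d(u)\le \lceil n/2\rceil$.

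For even $n$ this contradicts $d(u)\ge \lfloor n/2\rfloor+1$, so the dense case cannot occur. For odd $n$ the inequalities become tight, forcing $|I|=(n-1)/2$ and every vertex of $I$ to be adjacent to all of $J:=V\setminus I$; thus $G$ is the complete bipartite graph between $I$ and $J$ together with the graph $G[J]$. A final application of the matching condition, namely that two independent edges inside $J$ would share a common neighbour in $I$ and create a bowtie, shows that $G[J]$ has at most one edge, whence $e(G)=|I|\,|J|+e(G[J])\le \lfloor n^2/4\rfloor+1$. Ruling out triangular neighbourhoods and then pinning down this rigid near-bipartite configuration in the odd case is precisely where the real work lies; the rest reduces to Theorem \ref{thmMan} (in the triangle-free case) and the elementary deletion estimate above.
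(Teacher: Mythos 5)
Your proposal is necessarily a different route from the paper, because the paper does not prove Theorem \ref{EFGG-F2} at all: it cites the result from \cite{Erdos95}, and its only internal connection is the observation that the spectral theorem (Theorem \ref{thm-n-F2}) re-derives the edge bound for $n\ge 7$ via $2e(G)/n\le\lambda(G)$. A self-contained elementary induction of the kind you outline is therefore genuinely independent content, and most of it checks out: the lower-bound construction, the reformulation of $F_2$-freeness as ``no neighbourhood contains two independent edges,'' the deletion step when some vertex has degree at most $\lfloor n/2\rfloor$, the exclusion of triangles inside neighbourhoods when $\delta(G)\ge\lfloor n/2\rfloor+1$ (the disjointness count $4(\delta-3)\le n-4$ does fail exactly for $n\ge 6$, with $n=5$ covered by your base case), and the even-$n$ contradiction are all sound.

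There is, however, a genuine gap in the last step of the odd case. At that point $G$ is the complete bipartite graph between $I$ and $J$ plus whatever edges lie inside $J$, and you claim that forbidding two \emph{independent} edges in $J$ (i.e., applying the matching condition at a vertex $u\in I$, whose neighbourhood is exactly $J$) ``shows that $G[J]$ has at most one edge.'' That inference is false: a graph with no matching of size two can be a star $K_{1,t}$ or a triangle, so what you actually get from vertices of $I$ is only $e(G[J])\le\max\{|J|-1,3\}$, which is far above the $+1$ you need. These unexcluded configurations are in fact not $F_2$-free, but to see this you must centre the bowtie in $J$ rather than in $I$: if some $c\in J$ had two neighbours $l_1,l_2$ inside $J$, pick distinct $u_1,u_2\in I$ (possible since $|I|=(n-1)/2\ge 2$ for $n\ge 5$); as every vertex of $I$ is adjacent to all of $J$, the edges $u_1l_1$ and $u_2l_2$ form a matching of size two inside $N(c)$, so the triangles $cu_1l_1$ and $cu_2l_2$ form a bowtie. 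Hence every vertex of $J$ has at most one neighbour in $J$, i.e.\ $G[J]$ is a matching, and combined with your independent-edges observation this forces $e(G[J])\le 1$, giving $e(G)\le\lfloor n^2/4\rfloor+1$ and closing the argument. With that one sentence added, your proof is correct.
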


Motivated by these results above, we will 
determine
the unique $F_2$-free extremal graph attaining the maximal spectral radius 
for all integers $n\ge 7$.
In what follows, we will establish a spectral extension on Theorem \ref{EFGG-F2}.
Denote by $K_{ \lfloor \frac{n}{2} \rfloor, \lceil \frac{n}{2} \rceil}^+$ the graph obtained from
the balanced complete bipartite graph $K_{ \lfloor \frac{n}{2} \rfloor, \lceil \frac{n}{2} \rceil}$
by embedding an edge in the smaller part;
see Figure \ref{fig-2}.

\begin{figure}[H]
\centering
\includegraphics[scale=0.85]{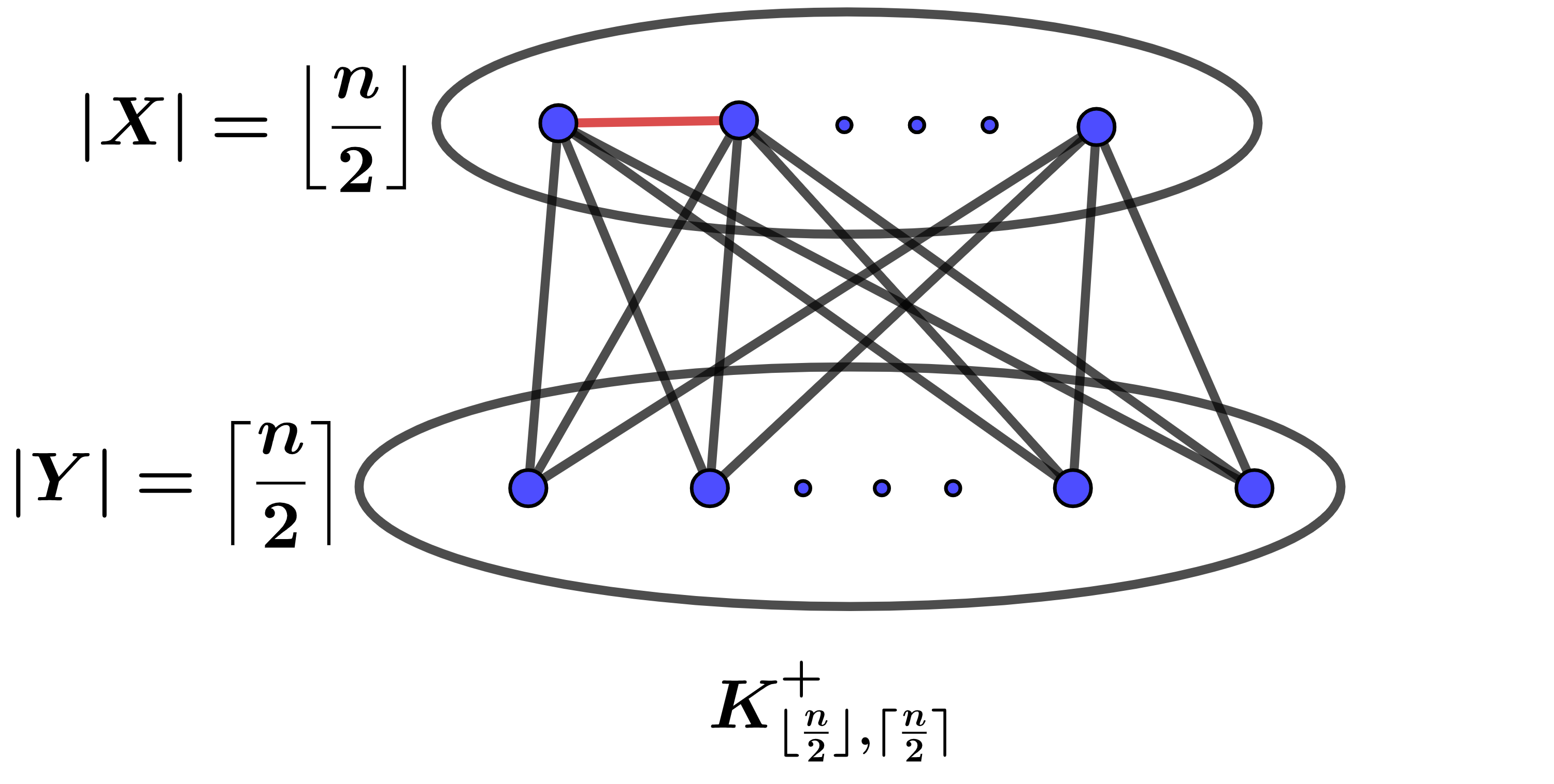}
\caption{The unique extremal graph in Theorem \ref{thm-n-F2}.}
\label{fig-2}
\end{figure}

One of the main results of this paper is as follows.

\begin{theorem} \label{thm-n-F2}
If $n\ge 7$ and $G$ is an $F_2$-free graph  on $n$ vertices, then
\[  \lambda (G) \le \lambda (K_{ \lfloor \frac{n}{2} \rfloor, \lceil \frac{n}{2} \rceil}^+), \]
equality holds if and only if $G=K_{ \lfloor \frac{n}{2} \rfloor, \lceil \frac{n}{2} \rceil}^+$.
\end{theorem}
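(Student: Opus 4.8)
The plan is to let $G$ be an $F_2$-free graph on $n\ge 7$ vertices of maximum spectral radius $\lambda:=\lambda(G)$ and to show $G=K_{\lfloor \frac{n}{2}\rfloor,\lceil \frac{n}{2}\rceil}^{+}$. First I would record two facts that drive everything. (i) The graph $K_{\lfloor \frac{n}{2}\rfloor,\lceil \frac{n}{2}\rceil}^{+}$ (abbreviate $K^{+}$) is itself $F_2$-free and, being a proper connected supergraph of $K_{\lfloor \frac{n}{2}\rfloor,\lceil \frac{n}{2}\rceil}$, satisfies $\lambda(K^{+})>\sqrt{\lfloor n^2/4\rfloor}$ by Perron--Frobenius monotonicity; hence $\lambda\ge\lambda(K^{+})>\sqrt{\lfloor n^2/4\rfloor}$. (ii) A graph is $F_2$-free if and only if for every vertex $v$ the induced subgraph $G[N(v)]$ has matching number at most $1$, equivalently $G[N(v)]$ is empty, a star, or a triangle. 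A short exchange argument (adding a bridge between two nontrivial components cannot create a bowtie yet strictly raises the spectral radius) lets me assume $G$ is connected, so it carries a positive Perron eigenvector $\mathbf{x}$, normalized so that $\max_i x_i=x_u=1$ for some vertex $u$.

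The engine of the argument is the walk identity at $u$, namely $\lambda^2 x_u=\sum_{v\in N(u)}\sum_{w\in N(v)}x_w$. I would split the right-hand side according to whether $w=u$, $w\in N(u)$, or $w\in R:=V\setminus N[u]$: the first block contributes $d(u)$, the second equals $\sum_{w\in N(u)}\deg_{G[N(u)]}(w)\,x_w$, and the third is $\sum_{w\in R}|N(u)\cap N(w)|\,x_w$. Fact (ii) is exactly what controls the middle block, since $G[N(u)]$ is a star or a triangle and its degree sequence is thus highly constrained, so the surplus beyond the bipartite-like first and third blocks is $O(1)$. Combining this with $\lambda^2>\lfloor n^2/4\rfloor$ forces $d(u)$ and the relevant degree sums to lie within a constant of $n/2$, and forces the eigenvector entries of the neighbors of $u$ to be uniformly close to $1$.

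With this quantitative control I would set up a bipartition $V=X\cup Y$ (morally $X\approx N(u)$ and $Y\approx R\cup\{u\}$) and prove a stability statement: almost all edges run between $X$ and $Y$, while each part spans only $O(1)$ edges. Here $F_2$-freeness does the final pruning --- once the cross structure is essentially complete, a \emph{second} edge inside a part, independent from a first one, would place two independent edges in the neighborhood of a common cross-neighbor and create a bowtie, so the interior of each part carries at most one edge. I would then upgrade stability to an exact description via local modifications justified through the Rayleigh quotient and $\mathbf{x}$: any missing edge between $X$ and $Y$ can be added while preserving $F_2$-freeness, strictly raising $\lambda$ and contradicting maximality, so the bipartite part is complete; a balancing computation (maximizing the Perron value of the associated $3\times 3$ quotient matrix over part sizes summing to $n$) pins the sizes to $\lfloor n/2\rfloor$ and $\lceil n/2\rceil$; and a direct comparison shows the single interior edge must sit in the smaller part. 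This yields $G=K_{\lfloor \frac{n}{2}\rfloor,\lceil \frac{n}{2}\rceil}^{+}$.

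I expect the main obstacle to be the passage from the approximate (stability) structure to the exact extremal graph \emph{uniformly for all $n\ge 7$}, rather than only for large $n$. The asymptotic arguments of Cioab\u{a}--Feng--Tait--Zhang and Zhai--Liu--Xue leave error terms that vanish only as $n\to\infty$; to reach the sharp threshold I must instead carry explicit, non-asymptotic eigenvalue estimates --- most likely by comparing the characteristic polynomials or Perron values of the finitely many competing near-extremal candidates (a differently placed interior edge, slightly unbalanced parts, or a triangle rather than a star inside some neighborhood) --- and verify that for every $n\ge 7$ the graph $K_{\lfloor \frac{n}{2}\rfloor,\lceil \frac{n}{2}\rceil}^{+}$ strictly dominates all of them, while exhibiting a genuinely different maximizer at $n=6$ to confirm that the threshold $n\ge 7$ is tight.
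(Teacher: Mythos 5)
Your opening and closing steps coincide with the paper's own proof: connectivity via an edge exchange, a Perron vector normalized at a maximum entry $x_u$, the identity $\lambda^2 x_u=\sum_{v\in N(u)}\sum_{w\in N(v)}x_w$ split over $\{u\}$, $A=N(u)$ and $B=V\setminus N[u]$, the observation that $F_2$-freeness forces $G[A]$ to be a triangle or a star plus isolated vertices, and a final balancing of a $3\times 3$ quotient matrix (the paper's Lemma \ref{lem-22}). The genuine gap is the quantitative middle, and it sits exactly where you predict it. Your only lower bound is $\lambda^2>\lfloor n^2/4\rfloor$, from $K_{\lfloor \frac{n}{2}\rfloor,\lceil \frac{n}{2}\rceil}^{+}\supset K_{\lfloor \frac{n}{2}\rfloor,\lceil \frac{n}{2}\rceil}$. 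The paper instead proves the sharper bound $\lambda^2(K_{\lfloor \frac{n}{2}\rfloor,\lceil \frac{n}{2}\rceil}^{+})>\lfloor n^2/4\rfloor+2$ (Lemma \ref{lem-21}), and this ``$+2$'' is what the entire argument runs on: every configuration that must be excluded --- $G[A]$ containing exactly one edge, a star center with a neighbor in $B$, a triangle inside $G[A]$, an edge inside $B$ --- satisfies a counting bound of the form $\lambda^2\le |A|+2e(A)+e(A,B)\le\lfloor n^2/4\rfloor+2$ (or $+1$), so each is killed by the sharper bound but is invisible to yours. Equivalently, the near-extremal competitors fall short of $\lambda(K_{\lfloor \frac{n}{2}\rfloor,\lceil \frac{n}{2}\rceil}^{+})$ by only $\Theta(1/n)$, i.e.\ by $O(1)$ in $\lambda^2$, which is precisely the size of the error terms your stability step leaves unquantified; a statement with ``almost all'' crossing edges and ``$O(1)$'' interior edges can never separate them, for any $n$, let alone down to $n=7$. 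Your proposed repair --- checking ``finitely many'' near-extremal candidates --- does not exist as stated, since the competitors form infinite families parameterized by the part sizes and by the star size $t$.

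A second, concrete error: your claim that fact (ii) makes the middle block an $O(1)$ surplus is false in the decisive case. In $K_{\lfloor \frac{n}{2}\rfloor,\lceil \frac{n}{2}\rceil}^{+}$ itself the maximum Perron entry sits at an endpoint $u$ of the interior edge, $G[N(u)]$ is the star $K_{1,\lceil n/2\rceil}$ centered at the other endpoint, and the middle block $\sum_{w\in N(u)}\deg_{G[N(u)]}(w)\,x_w$ is of order $n\cdot x_u$. So the extremal configuration is not a perturbation of the bipartite count and cannot be ``pruned''; it has to be identified exactly. The paper does this by showing, again via the $+2$ bound, that in the one surviving case ($G[A]$ a star $K_{1,t}$ with $t\ge 2$ whose center $u_0$ has no neighbor in $B$) one must have $e(B)=0$, and then invoking maximality of $\lambda$ to force every vertex of $B$ to be adjacent to all of $A\setminus\{u_0\}$ and to force $t=|A|-1$, which exhibits $G$ as a $K_{|B|+2,\,|A|-1}^{+}$; only at that point does your balancing step (Lemma \ref{lem-22}) take over. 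Note also a circularity in your pruning of interior edges: two independent interior edges yield a bowtie only at a common cross-neighbor of all four endpoints, whose existence requires the cross structure to be (essentially) complete, while your completion of the cross structure presupposes that each side spans at most one edge; for $n=7$, with parts of sizes $3$ and $4$, there is no slack with which to untangle this. As written, the plan reproves nothing sharper than the asymptotic results you cite and cannot reach the stated threshold $n\ge 7$.
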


\noindent
{\bf Remark.}
The bound $n\ge 7$ in Theorem \ref{thm-n-F2}
is the best possible, since for the case $n=6$,
there exists an $F_2$-free graph
with larger spectral radius. Namely,
$K_2\vee I_4$.
Upon computations, we have
$\lambda (K_2\vee I_4)\approx 3.722$,
while $\lambda (K_{3,3}^+)\approx 3.504$.

\medskip

\noindent
{\bf Observation.}
Using the fundamental inequality ${2e(G)}/{n} \le \lambda (G)$,
it is easy to verify that our result (Theorem \ref{thm-n-F2})
implies Theorem \ref{EFGG-F2}.
A tedious calculation gives $\lambda (K_{\frac{n}{2}, \frac{n}{2}}^+) <
\frac{n}{2} + \frac{2}{n} + \frac{8}{n^2}$ for even $n$,
and $\lambda(K_{\frac{n-1}{2}, \frac{n+1}{2}}^+) < \frac{n}{2} + \frac{3}{2n} + \frac{1}{n}$
for odd $n$ by using Lemma \ref{lem-21}.  Hence, we get $e(G) \le
\lfloor \frac{n}{2} \lambda (G) \rfloor
\le \lfloor \frac{n}{2} \lambda (K_{ \lfloor \frac{n}{2} \rfloor, \lceil \frac{n}{2} \rceil}^+)
\rfloor
= \lfloor n^2/4 \rfloor +1$.

\medskip 
We mention here that the proof of Theorem \ref{thm-n-F2} is completely different from that
of Theorem \ref{thmCFTZ20}, since the triangle removal lemma was used in the proof
of   Theorem \ref{thmCFTZ20} and  it requires the order $n$ of the graph $G$ to be sufficiently large.
The techniques in our proof are partially
inspired by recent articles of Zhai and Lin \cite{ZL2022jgt},
and Ning and Zhai \cite{NZ2021} as well.

\subsection{Spectral problems for graphs with given number of edges}

In 1970, Nosal \cite{Nosal1970}
determined the largest spectral radius of a  triangle-free graph
in terms of the number of edges, which states that
if $G$ is a triangle-free graph, then $\lambda (G)\le \sqrt{m}$.
Furthermore, Nikiforov \cite{Niki2002cpc,Niki2006laa} extended Nosal's theorem
and showed that if $G$ is triangle-free, then $\lambda (G) < \sqrt{m}$ unless $G$ is a complete bipartite graph (possibly with some isolated vertices).
In the sequel, when we consider the result on a graph with respect to
the given number of edges,
we shall ignore the possible isolated vertices if there are no confusions.

\begin{theorem}[Nosal--Nikiforov, \cite{Niki2002cpc,Niki2006laa}] \label{thmnosal}
If $G$ is a triangle-free graph with $m$ edges, then
\begin{equation}   \label{eq1}
\lambda (G)\le \sqrt{m} ,
\end{equation}
equality holds if and only if
$G$ is a complete bipartite graph.
\end{theorem}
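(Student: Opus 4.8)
The plan is to prove the bound by the method of moments applied to the eigenvalues, and then to extract the extremal structure from a careful analysis of when each inequality in that chain becomes an equality. Write $\lambda_1\ge \lambda_2\ge \cdots \ge \lambda_n$ for the eigenvalues of $A=A(G)$ and set $\mu=\lambda_1=\lambda(G)$. Three trace identities are available: $\tr(A)=0$, $\tr(A^2)=2m$, and $\tr(A^3)=6\cdot(\text{number of triangles of }G)=0$, the last of these being precisely where the hypothesis that $G$ is triangle-free enters. Writing $P=\{i:\lambda_i>0\}$ and $N=\{i:\lambda_i<0\}$, the vanishing of $\tr(A^3)$ yields the key identity $\sum_{i\in P}\lambda_i^3=\sum_{i\in N}|\lambda_i|^3$.

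First I would establish the inequality. Since $\mu$ is the spectral radius, $|\lambda_i|\le \mu$ for every $i$; as $\mu\in P$ contributes a single term, $\mu^3\le \sum_{i\in P}\lambda_i^3$. On the other side, $|\lambda_i|^3=|\lambda_i|\,\lambda_i^2\le \mu\lambda_i^2$ gives $\sum_{i\in N}|\lambda_i|^3\le \mu\sum_{i\in N}\lambda_i^2\le \mu(2m-\mu^2)$, the last step using $\sum_{i\in P}\lambda_i^2\ge \mu^2$ together with $\tr(A^2)=2m$. Feeding these two bounds into the identity $\sum_{i\in P}\lambda_i^3=\sum_{i\in N}|\lambda_i|^3$ produces $\mu^3\le \mu(2m-\mu^2)$, hence $2\mu^2\le 2m$ and $\lambda(G)=\mu\le \sqrt m$.

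The real work is the equality case, and this is the step I expect to be the main obstacle. Suppose $\lambda(G)=\sqrt m$. I would first reduce to a connected graph: if $G_1$ is a component of largest spectral radius with $m_1$ edges, then $\sqrt m=\lambda(G)=\lambda(G_1)\le \sqrt{m_1}\le \sqrt m$ forces $m_1=m$, so every other component is edgeless and, by the paper's convention, may be ignored. For connected $G$, tracing back the inequalities forces two things: $\mu^3=\sum_{i\in P}\lambda_i^3$ leaves no room for a second positive eigenvalue, so $P=\{1\}$ and $\mu$ is the only positive eigenvalue (simple, by Perron--Frobenius); and equality in $\sum_{i\in N}|\lambda_i|^3\le \mu\sum_{i\in N}\lambda_i^2$ forces $\lambda_i=-\mu$ for every $i\in N$. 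Thus the smallest eigenvalue equals $-\lambda_1$, so $G$ is bipartite; bipartiteness makes the spectrum symmetric about $0$, whence $-\mu$ is also simple and the spectrum is exactly $\{\mu,-\mu,0^{(n-2)}\}$.

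Finally I would read off the structure from the rank. The adjacency matrix has rank $2$, and in the bipartite block form $A=\left(\begin{smallmatrix}0 & B\\ B^{\top} & 0\end{smallmatrix}\right)$ this forces $\mathrm{rank}(B)=1$. A $0/1$ matrix of rank $1$ having no zero row or column (guaranteed by connectedness) must be an all-ones block, so $B=J$ and $G=K_{s,t}$ for some $s,t$. The converse is the routine verification that $K_{s,t}$ has $\lambda=\sqrt{st}=\sqrt m$, so equality holds precisely for complete bipartite graphs. The delicate point throughout is the bookkeeping of the equality conditions in the moment inequalities — in particular, ruling out a second positive eigenvalue and confirming simplicity of $\pm\mu$ — which is exactly what pins the graph down to rank $2$ and hence to a complete bipartite graph.
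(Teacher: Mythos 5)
Your proof is correct, but there is nothing in the paper to compare it against: Theorem \ref{thmnosal} is quoted from the literature (Nosal's thesis and Nikiforov \cite{Niki2002cpc,Niki2006laa}) and the paper never reproduces a proof of it. Judged on its own, your argument is complete. The bound via $\tr(A^3)=0$, i.e.\ $\mu^3\le\sum_{i\in P}\lambda_i^3=\sum_{i\in N}|\lambda_i|^3\le\mu\sum_{i\in N}\lambda_i^2\le\mu(2m-\mu^2)$, is sound (you implicitly need $m\ge 1$ so that $\mu>0$ before dividing; the case $m=0$ is vacuous), and your equality analysis correctly extracts $P=\{1\}$, $\lambda_i=-\mu$ for all $i\in N$, hence $\lambda_{\min}=-\lambda_{\max}$, bipartiteness of the connected component, the spectrum $\{\mu,-\mu,0^{(n-2)}\}$, rank $2$, $\mathrm{rank}(B)=1$, and finally $B=J$, i.e.\ $G=K_{s,t}$; each of these steps checks out (a slightly shorter route to $|N|=1$ is $\tr(A)=\mu-|N|\mu=0$, but your symmetry argument is equally valid). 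For the record, your trace-moment argument is a genuinely different proof from the ones in the cited sources: Nikiforov's proofs go through walk counting and the Motzkin--Straus inequality, with the equality characterization extracted from the walk argument, whereas yours pins down the extremal graphs purely spectrally via the rank-two structure. Both are standard; yours has the advantage that the equality case falls out mechanically from tracking when each moment inequality is tight.
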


Theorem \ref{thmnosal} implies that if  $G$ is a  bipartite graph
with $m$ edges,
then $  \lambda (G)\le \sqrt{m} $,
equality holds if and only if
$G$ is a complete bipartite graph.
On the one hand, inequality (\ref{eq1}) can imply
(\ref{eq-man}) in Theorem \ref{thmMan}.
Indeed, applying the Rayleigh inequality, we have
$\frac{2m}{n}\le \lambda (G)\le  \sqrt{m}$,
which yields $ m \le \lfloor {n^2}/{4} \rfloor$.
On the other hand,  combining  (\ref{eq1}) with  Mantel's theorem,  we obtain
$ \lambda (G)\le \sqrt{m}  \le \sqrt{\lfloor {n^2}/{4}\rfloor} =\lambda (K_{\lfloor \frac{n}{2}\rfloor, \lceil \frac{n}{2} \rceil })$.
Henceforth, inequality (\ref{eq1})  can also imply (\ref{eq2}) in Theorem \ref{thm-niki}.

\medskip
During the past few years, the inequality (\ref{eq1}) in Theorem \ref{thmnosal}
boosted the great interests of studying the maximum spectral radius of graphs
in terms of the number of edges, rather than the given number of vertices;
see \cite{Niki2002cpc} for an extension on $K_{r+1}$-free graphs,
\cite{Niki2009laa} for $C_4$-free graphs,
\cite{ZLS2021} for  $K_{2,r+1}$-free graphs,
and similar results of $C_5$-free and $C_6$-free graphs \cite{MLH2022} as well,
\cite{Niki2021} for an extension on book-free graphs,
and  \cite{LNW2021,ZS2022dm,LP2022oddcycle}  for non-bipartite triangle-free graphs. The spectral extremal problem for graphs with given size
is interesting in its own right, and it
is increasingly becoming an important and popular topic
in recent researches on spectral graph theory.

\medskip
In this section, we will focus the attention mainly on the extremal spectral problems
for $F_2$-free graphs with given number of edges.
Recall that $K_2\vee I_{\frac{m-1}{2}}$
denotes the join graph obtained from an edge $K_2$ and an independent set
$I_{\frac{m-1}{2}}$ such that each vertex of the edge is adjacent to each vertex of the
independent set.

\begin{figure}[H]
\centering
\includegraphics[scale=0.85]{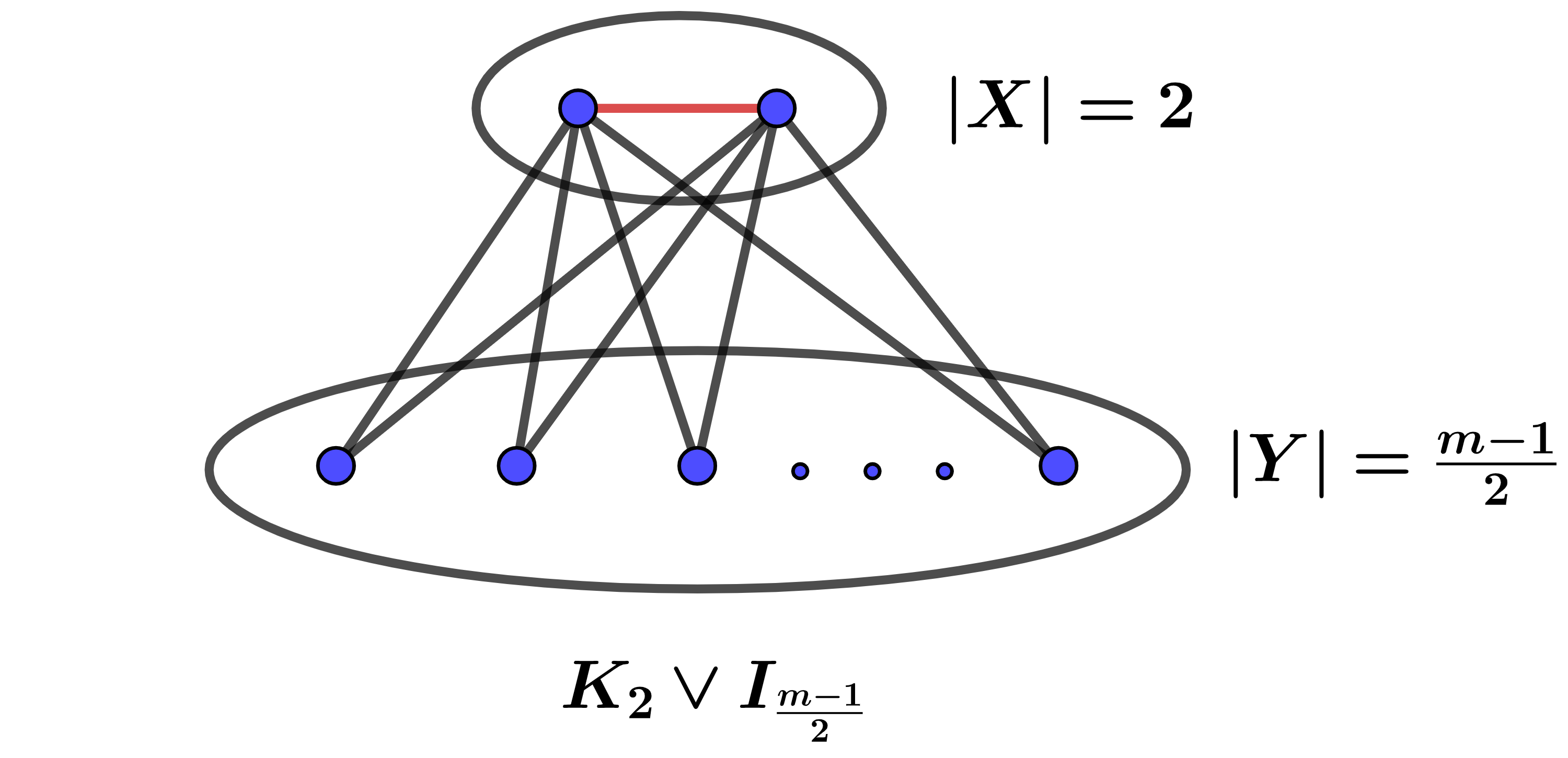}
\caption{The unique extremal graph in Theorem \ref{thm-m-F2}.}
\label{fig-m-edges}
\end{figure}

\begin{theorem}  \label{thm-m-F2}
If $m\ge 8$ and $G$ is an $F_2$-free graph with $m$ edges, then
\[  \lambda (G)\le \frac{1+\sqrt{4m-3}}{2}, \]
equality holds if and only if $G=K_2\vee I_{\frac{m-1}{2}}$.
\end{theorem}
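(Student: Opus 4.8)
The plan is to pass to the Perron eigenvector and reduce the whole problem to the local structure of $G$ near a vertex of maximum eigenvector weight. We may assume $G$ is connected, since for fixed $m$ the spectral radius is attained on a single component, which has at most $m$ edges. Let $\mathbf{x}$ be the positive Perron vector of $A(G)$, scaled so that $\max_v x_v = x_u = 1$. The structural input is that $F_2$-freeness is local: two triangles meeting only at $v$ are the same as two disjoint edges inside $N(v)$, so $G$ is $F_2$-free if and only if $G[N(v)]$ has matching number at most $1$ for every $v$. Hence, up to isolated vertices, $G[N(u)]$ is edgeless, a single star $K_{1,s}$, or a triangle.

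The analytic core is the expansion of $(A^2-A)\mathbf{x}$ at $u$. With $\tau(uw)=|N(u)\cap N(w)|$,
\[ \lambda^2-\lambda = d(u) + \sum_{w\sim u}(\tau(uw)-1)x_w + \sum_{w\not\sim u,\,w\ne u}\tau(uw)x_w . \]
Bounding $x_w\le 1$ on the nonnegative coefficients, discarding the nonpositive contributions of the vertices isolated in $G[N(u)]$, and using $\sum_{z\in N(u)}d(z)=m+e(N(u))-e(\overline N)$ with $\overline N=V(G)\setminus(N(u)\cup\{u\})$, I would obtain $\lambda^2-\lambda \le m-1-e(\overline N)+[e(N(u))+b-(d(u)-1)]$, where $b$ counts the isolated vertices of $G[N(u)]$. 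The bracket equals $0$ exactly when $G[N(u)]$ is a star and equals $1$ otherwise, so the star case immediately gives $\lambda^2-\lambda\le m-1$, equivalently $\lambda\le \frac{1+\sqrt{4m-3}}{2}$ — and this is sharp because $K_2\vee I_{\frac{m-1}{2}}$ has $G[N(u)]$ a star.

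Next I would clear the other local types. If $d(u)\le 3$ then $\lambda=\sum_{w\sim u}x_w\le 3<\frac{1+\sqrt{4m-3}}{2}$ as soon as $m\ge 8$; this is precisely where the hypothesis bites, and it is sharp, since for $m=7$ the $F_2$-free graph $K_4$ with a pendant edge has $\lambda>3=\frac{1+\sqrt{25}}{2}$. If $G[N(u)]$ is edgeless with $e(\overline N)=0$ then $G$ is bipartite, and the Nosal--Nikiforov theorem (Theorem \ref{thmnosal}) gives $\lambda\le\sqrt m<\frac{1+\sqrt{4m-3}}{2}$; and if $G[N(u)]$ is edgeless or a triangle with $e(\overline N)\ge 1$, the estimate above already gives $\lambda^2-\lambda\le m-e(\overline N)\le m-1$.

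The genuine obstacle is the remaining configuration: $G[N(u)]$ a triangle $\{a,b,c\}$ with $e(\overline N)=0$ and $d(u)\ge 4$, where the crude bound only yields $\lambda^2-\lambda\le m$. Here the exact expansion collapses to $\lambda^2-\lambda\le (m-3)+x_a+x_b+x_c$, so everything comes down to the refined eigenvector estimate $x_a+x_b+x_c<2$. I would prove this by substituting the eigenvalue equations at $a,b,c$ and at the vertices of $\overline N$ into one another: each vertex of $\overline N$ attaches only to $N(u)$ and is therefore damped by a factor $\lambda^{-1}$, and since $\lambda>3$ once $m\ge 8$ this damping keeps at least one of $x_a,x_b,x_c$ bounded below $1$ by exactly the margin needed. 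This quantitative step, calibrated so that the slack just beats $m-1$, is the delicate part. Finally, for the equality case I would revisit the star analysis: tightness forces $e(\overline N)=0$ and $x_w=1$ wherever a coefficient is positive, and positivity of $\mathbf{x}$ excludes isolated vertices in $G[N(u)]$, so $G[N(u)]=K_{1,s}$ with center $c$ of weight $1$, producing a $K_2\vee I_s$ on $\{u,c\}\cup N(u)$. A would-be extra vertex $r\in\overline N$ shares a neighbour with $u$, forcing $x_r=1$, which contradicts $x_r=\lambda^{-1}\sum_{v\sim r}x_v<1$, while $F_2$-freeness at $c$ forbids edges among the leaves; hence $G=K_2\vee I_{\frac{m-1}{2}}$.
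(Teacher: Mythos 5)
Your skeleton coincides with the paper's: expand $(A^2-A)\mathbf{x}$ at a maximum-weight vertex $u$, classify $G[N(u)]$ (up to isolated vertices) as edgeless, a star, or a triangle via $2K_2$-freeness, and convert edge counts using $m=d(u)+e(N(u))+e(N(u),\overline{N})+e(\overline{N})$. Within that skeleton your star case is correct and actually cleaner than the paper's: the direct bound $\lambda^2-\lambda\le m-1-e(\overline{N})$ plus your tightness analysis replaces the paper's separate Lemma \ref{lem31}. The edgeless case and the $d(u)\le 3$ reduction are also fine (modulo writing out why equality cannot occur when your bracket equals $1$, which is routine).

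The genuine gap is exactly the step you defer: the triangle case with $e(\overline{N})=0$ and $d(u)\ge 4$, where you need $x_a+x_b+x_c<2$. This estimate is \emph{false} for the configuration in question, so no argument from the structure alone — in particular not the proposed "$\lambda^{-1}$-damping of $\overline{N}$" — can prove it. Take $G=K_1\vee(K_3\cup I_{m-6})$, which lies squarely in your remaining case with $\overline{N}=\varnothing$ (so there is nothing to damp): for $m=8$ this is $K_1\vee(K_3\cup I_2)$, the eigen-equations give $x_a=x_b=x_c=x_u/(\lambda-2)$, hence $x_a+x_b+x_c=3/(\lambda-2)\approx 2.54>2$ with $\lambda\approx 3.179$. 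The theorem survives for this graph only because $3.179$ happens to fall below $\tfrac{1+\sqrt{29}}{2}\approx 3.193$, a margin of about $0.01$ that no soft eigenvector bound of the kind you sketch will detect. Even if you run the argument as a contradiction (assuming $\lambda\ge\tfrac{1+\sqrt{4m-3}}{2}$), the bound you get in the $\overline{N}=\varnothing$ subcase is $x_a+x_b+x_c\le 6/(\sqrt{4m-3}-3)$, which is below $2$ only for $m\ge 10$; the cases $m=8,9$ escape, and when $\overline{N}\neq\varnothing$ the sum $x_a+x_b+x_c$ is only larger. This is precisely where the paper does its real work: when $\overline{N}=\varnothing$ it identifies $G$ exactly as $K_1\vee(K_3\cup I_{m-6})$ and compares the largest root of its characteristic cubic $h(x)=x^3-2x^2+(3-m)x+2m-12$ against $\tfrac{1+\sqrt{4m-3}}{2}$ for all $m\ge 8$; when $\overline{N}\neq\varnothing$ it first derives extra structure from extremality (every $w\in\overline{N}$ has $d(w)\ge 2$, at most one neighbour in the triangle, at least one neighbour in $A_0$, $e(\overline{N})=0$), uses this to rule out $m\ge 12$ by counting, and then checks the finitely many surviving graphs $G_0,\dots,G_3$ at $m=9,10,11$ by explicit computation. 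None of this — the extremality-based degree fact, the counting, or the small-case verifications — appears in your proposal, and without some substitute for it the proof does not close.
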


\noindent
{\bf Remark.}
The bound $m\ge 8$ in Theorem \ref{thm-m-F2} is tight,
since for  $m=7$,
there exists an $F_2$-free graph
with larger spectral radius. Namely,
$G=K_1 \vee (K_3 \cup K_1)$.
An easy calculation gives $\lambda (G) \approx 3.086$,
but $\lambda (K_2 \vee I_3) = \frac{1+\sqrt{4m-3}}{2}=3$ is slightly smaller.

\medskip
At the end of this section,
we propose a conjecture for $F_k$-free graphs.

\begin{conjecture}[$F_k$-free graphs with given size]
\label{conj-m-Fk}
Let $k\ge 2$ be fixed and $m$ be large enough.
If $G$ is an $F_k$-free graph with $m$ edges, then
\[  \lambda (G)\le \frac{k-1 +\sqrt{4m -k^2+1}}{2}, \]
equality holds if and only if
$G=K_k \vee I_{\frac{1}{k}\left(m-{k \choose 2} \right)}$.
\end{conjecture}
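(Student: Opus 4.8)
\noindent
The plan is to recast the spectral bound as a quadratic inequality between $\lambda$ and $m$, and then to attack that inequality by a local walk-counting analysis around a vertex of maximal Perron weight, exploiting the matching constraint that $F_k$-freeness forces on every neighbourhood. Squaring and simplifying, the bound $\lambda(G)\le\frac{k-1+\sqrt{4m-k^2+1}}{2}$ is equivalent to
\begin{equation}\label{eqFk-reduce}
\lambda^2-(k-1)\lambda \le m-\binom{k}{2},
\end{equation}
and one checks at once that $K_k\vee I_t$ with $t=\frac1k(m-\binom k2)$ achieves equality: its two-cell equitable partition (clique versus independent set) has quotient characteristic equation $\lambda^2-(k-1)\lambda-tk=0$, and $tk=m-\binom k2$. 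Thus it suffices to prove \eqref{eqFk-reduce} with equality only for this graph. Since \eqref{eqFk-reduce} specialises to $\lambda^2-\lambda\le m-1$ at $k=2$, which is the content of Theorem \ref{thm-m-F2}, the scheme below is built to recover that case.

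\noindent
Let $\mathbf x$ be the Perron eigenvector, normalised so that $x_u=\max_v x_v=1$ for some vertex $u$, and write $N=N(u)$, $H=G[N]$ and $N_2=N(N)\setminus N[u]$. Expanding $\lambda^2=\lambda^2x_u=\sum_{v\in N}\sum_{w\in N(v)}x_w$ and sorting the walks of length two by their far endpoint yields the identity
\begin{equation}\label{eqFk-walk}
\lambda^2 = d(u)+\sum_{v\in N}d_H(v)\,x_v+\sum_{w\in N_2}|N(w)\cap N|\,x_w .
\end{equation}
Because $u$ lies in no copy of $F_k$, the induced graph $H$ has matching number $\nu(H)\le k-1$. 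Applying $x_w\le 1$ to the $N_2$-sum, using $\sum_{w\in N_2}|N(w)\cap N|=e(N,N_2)\le m-d(u)-e(H)$, and invoking $\sum_{v\in N}x_v=\lambda$, inequality \eqref{eqFk-reduce} reduces to the weighted, purely local claim
\begin{equation}\label{eqFk-local}
\sum_{v\in N}\bigl(d_H(v)-(k-1)\bigr)x_v\ \le\ e(H)-\binom{k}{2}.
\end{equation}
The extremal case is transparent: for $G=K_k\vee I_t$ and $u$ a clique vertex one has $N_2=\varnothing$, $H=K_{k-1}\vee I_t$, every clique vertex carries weight $1$, and both sides of \eqref{eqFk-local} equal $(k-1)(t-1)$, so each inequality used is tight.

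\noindent
The crux, and where the genuine difficulty lies, is \eqref{eqFk-local}. Vertices with $d_H(v)\le k-1$ contribute nonpositively, and the negative mass they supply, which sums to $-(k-1)\lambda$ in the sparse regime, cannot be discarded: applying $x_v\le 1$ to the \emph{heavy} vertices $\{v:d_H(v)\ge k\}$ alone yields the false statement $e(H)-\binom k2\ge\sum_{v:d_H(v)\ge k}(d_H(v)-(k-1))$, which already fails for $H=K_{2k-1}$. This is exactly the phenomenon forcing a lower bound on $m$: the clique $K_{2k}$ (every neighbourhood of which is $K_{2k-1}$, hence $F_k$-free) violates \eqref{eqFk-reduce} at $m=k(2k-1)$, generalising the role of $K_4$ at $k=2$, and such a dense configuration cannot coexist with large $m$. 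I would therefore control heavy vertices through two inputs. First, the Erd\H{o}s--Gallai theorem on matchings caps $e(H)\le\max\{\binom{2k-1}{2},\,(k-1)|N|-\binom k2\}$, while the endpoints of a maximum matching give a vertex cover $C$ of $H$ with $|C|\le 2(k-1)$, confining all high-degree behaviour to a bounded set. Second, the spectral largeness $\lambda\ge 2m/n$ and $\lambda\ge\sqrt{d(u)}$ forces, for $m$ large, the Perron weights attached to $C$ to compensate the heavy terms. The aim is a dichotomy: either $H$ lies in the sparse Erd\H{o}s--Gallai regime, where $(k-1)|N|-\binom k2$ dominates and \eqref{eqFk-local} follows once the light-vertex mass is accounted for via $\sum_{v\in N}x_v=\lambda$; or $H$ is confined to $O(k)$ vertices, whence $e(H)=O(k^2)$ and the large-$m$ hypothesis renders \eqref{eqFk-reduce} slack by a wide margin.

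\noindent
For the equality characterisation I would reverse the chain: equality in \eqref{eqFk-reduce} forces equality throughout \eqref{eqFk-walk}--\eqref{eqFk-local}, hence $N_2=\varnothing$ (so $u$ dominates $G$), every weight used at value $1$ is genuinely $1$, and the degree profile of $H$ matches $K_{k-1}\vee I_t$ exactly; propagating this through the eigenequation pins down $G=K_k\vee I_t$. The main obstacle, and the reason the statement remains open for $k\ge 3$, is making the dichotomy of the third step fully quantitative and uniform in $k$: the Gallai--Edmonds structure of a neighbourhood with $\nu(H)\le k-1$ is far richer than the star/triangle alternative available at $k=2$, and bounding the joint effect of many heavy vertices against their Perron weights, without an a priori stability theorem, is the heart of the matter.
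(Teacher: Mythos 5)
You should first note that the statement you set out to prove is stated in the paper as Conjecture \ref{conj-m-Fk} and is left \emph{open} there; the paper proves only the case $k=2$ (Theorem \ref{thm-m-F2}), so there is no proof in the paper to compare against. Your reduction is a faithful generalization of the skeleton of that $k=2$ proof: the equivalence of the spectral bound with $\lambda^2-(k-1)\lambda\le m-\binom{k}{2}$, the second-order eigenvalue expansion at a vertex $u$ of maximal Perron weight, the matching constraint $\nu(G[N(u)])\le k-1$ forced by $F_k$-freeness, and the resulting local inequality $\sum_{v\in N}\bigl(d_H(v)-(k-1)\bigr)x_v\le e(H)-\binom{k}{2}$ are all correctly derived, your verification that $K_k\vee I_t$ attains equality throughout is right, and your observation that $K_{2k}$ violates the bound at $m=k(2k-1)$ correctly explains why the large-$m$ hypothesis is unavoidable.

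But this is not a proof, and the gap sits exactly where you locate it. The local inequality is false with the trivial weights (your $K_{2k-1}$ example), so it can hold only because the Perron weights of heavy vertices are bounded away from $x_u$; nothing in your argument produces such a bound. The proposed dichotomy (Erd\H{o}s--Gallai sparse regime versus $H$ confined to $O(k)$ vertices) is described but not executed: in the sparse branch, $e(H)\le(k-1)|N|-\binom{k}{2}$ does not by itself yield the \emph{weighted} inequality, because the negative contributions of light vertices carry weights $x_v$ that may be arbitrarily small, and the identity $\sum_{v\in N}x_v=\lambda$ does not discriminate between light and heavy vertices. This is precisely the step that the paper's $k=2$ proof replaces by a complete structural classification of $2K_2$-free neighbourhoods (star or triangle plus isolated vertices, Claim 2 of Section 3), followed by the ad hoc Lemma \ref{lem31} for the star case and a finite check of the sporadic graphs $G_0,\dots,G_3$ for small $m$. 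For $\nu(H)\le k-1$ with $k\ge 3$ no such finite classification exists, and the Gallai--Edmonds structure admits configurations (several overlapping near-cliques on up to $2k-1$ vertices) whose joint heavy-vertex contribution your scheme does not control. Your equality characterisation also presupposes the unproved inequality, so it cannot be salvaged independently. In short: the reduction is sound and would recover Theorem \ref{thm-m-F2} at $k=2$, but the central claim for $k\ge 3$ is missing, as your own closing paragraph concedes, and the conjecture remains open.
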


\medskip
This paper is organized as follows.
In the next section, we will present the proof of Theorem \ref{thm-n-F2}.
In Section \ref{sec3},
the proof of Theorem \ref{thm-m-F2} will be provided.
In the last section, we conclude the paper
with some open problems.

\section{Proof of Theorem \ref{thm-n-F2}}
\label{sec2}

To begin with,
we shall present two lemmas for simplicity.
The first lemma gives the characterization
of the spectral radius of  $K_{ \lfloor \frac{n}{2} \rfloor, \lceil \frac{n}{2} \rceil}^+$, and it yields some lower/upper bounds for our purpose.
The second lemma states that if
one adds an edge within one part of a bipartite graph,
then the maximum spectral radius is attained
by adding this edge into the smaller vertex part
of a balanced bipartite graph.

\begin{lemma} \label{lem-21}
(a) If $n$ is even, then $\lambda(K_{\frac{n}{2}, \frac{n}{2}}^+)$
is the largest root of
\[  f(x)=x^3-x^2 - \frac{n^2}{4}x + \frac{n^2}{4}-n. \]
(b) if $n$ is odd, then $\lambda(K_{\frac{n-1}{2}, \frac{n+1}{2}}^+)$
is the largest root of
\[ g(x)=x^3 - x^2 + \frac{1-n^2}{4} x +
\frac{n^2}{4} - n - \frac{5}{4}. \]
Consequently, we have
\begin{equation} \label{eq-n2+2}
\lambda^2 (K_{ \lfloor \frac{n}{2} \rfloor, \lceil \frac{n}{2} \rceil}^+) >\left\lfloor \frac{n^2}{4} \right\rfloor +2.
\end{equation}
\end{lemma}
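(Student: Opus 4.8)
The plan is to compute the characteristic polynomial of $K_{\lfloor n/2\rfloor,\lceil n/2\rceil}^+$ by exploiting its symmetry, then derive the stated inequality~\eqref{eq-n2+2} by evaluating the relevant cubic at a convenient point. The graph $K_{\lfloor n/2\rfloor,\lceil n/2\rceil}^+$ has a strong equitable partition: for even $n=2t$, the vertex classes are the two endpoints of the embedded edge (call this set $S_1$, of size $2$), the remaining $t-2$ vertices of the smaller part (set $S_2$), and the $t$ vertices of the larger part (set $S_3$). Any automorphism permuting vertices within each class fixes the Perron vector's values on that class, so the Perron eigenvalue equals the largest eigenvalue of the $3\times 3$ quotient (divisor) matrix
\[
B=\begin{pmatrix} 1 & 0 & t \\ 0 & 0 & t \\ 2 & t-2 & 0 \end{pmatrix},
\]
whose rows encode the adjacency counts from $S_1,S_2,S_3$. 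Expanding $\det(xI-B)$ gives the cubic $f(x)=x^3-x^2-\tfrac{n^2}{4}x+\tfrac{n^2}{4}-n$ after substituting $t=n/2$, establishing part~(a). The odd case $n=2t+1$ is handled identically with the larger part now of size $t+1$ and the smaller part contributing $S_1$ of size $2$ and $S_2$ of size $t-1$; the quotient matrix yields $g(x)$ after substituting and simplifying.

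For the consequence~\eqref{eq-n2+2}, I would argue that since $\lambda:=\lambda(K^+_{\cdots})$ is the largest root of a cubic whose leading coefficient is positive, it suffices to show $f\bigl(\sqrt{\lfloor n^2/4\rfloor+2}\bigr)<0$ (and likewise $g$ in the odd case), because that forces the largest root to exceed $\sqrt{\lfloor n^2/4\rfloor+2}$. Writing $x_0^2=\lfloor n^2/4\rfloor+2$, I would substitute into $f$ or $g$ and regroup terms so that the $x_0$-dependence collapses: for instance in the even case $f(x_0)=x_0(x_0^2-\tfrac{n^2}{4})-(x_0^2-\tfrac{n^2}{4})-n+\cdots$, where $x_0^2-\tfrac{n^2}{4}=2$, turning the expression into something of the form $2x_0 - (\text{constant in }n)$ whose sign can be checked directly. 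The key point is that the irrational part $\sqrt{\lfloor n^2/4\rfloor+2}$ appears with a controllable coefficient after using $x_0^2-\lfloor n^2/4\rfloor=2$, so the inequality reduces to comparing a linear-in-$x_0$ quantity against a rational bound.

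The main obstacle I anticipate is the odd case: the cubic $g$ carries the extra $-\tfrac54$ and the half-integer shift from $\lfloor n^2/4\rfloor=\tfrac{n^2-1}{4}$, so the algebra after substituting $x_0^2=\tfrac{n^2-1}{4}+2=\tfrac{n^2+7}{4}$ is less clean than in the even case, and I would need to verify that the residual linear term in $x_0$ has the correct sign for \emph{all} admissible $n$ rather than only asymptotically. A secondary subtlety is justifying that the Perron eigenvalue is genuinely the largest eigenvalue of the quotient matrix $B$ (not merely an eigenvalue): this follows because the partition is equitable, so the spectrum of $B$ is a subset of the spectrum of $A(G)$, and by the Perron--Frobenius theorem applied to the connected graph $K^+_{\cdots}$ the dominant eigenvalue has a positive eigenvector which is necessarily constant on each cell and hence lifts from $B$. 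With these two points in hand the lemma follows by the sign computation described above.
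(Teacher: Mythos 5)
Your proposal follows essentially the same route as the paper: the identical three-cell equitable partition with its $3\times 3$ quotient matrix yielding $f$ and $g$, and then the same sign check, evaluating the cubic at $\sqrt{\lfloor n^2/4\rfloor+2}$ (the paper computes $f\bigl(\sqrt{n^2/4+2}\bigr)=\sqrt{n^2+8}-n-2<0$ and $g\bigl(\sqrt{(n^2-1)/4+2}\bigr)=\sqrt{n^2+7}-n-3<0$, which is exactly the collapse via $x_0^2-\lfloor n^2/4\rfloor=2$ that you describe). Two small notes: in the odd case with $n=2t+1$ the cell $S_2$ has $t-2$ vertices, not $t-1$ (the smaller part has $t$ vertices, two of which lie in $S_1$); and the odd-case algebra you flagged as a potential obstacle in fact comes out cleanly, since $g(x_0)=2x_0-n-3$ and $\sqrt{n^2+7}<n+3$ holds for every $n\ge 1$.
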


\begin{proof}
(a) Let $\bm{x}=(x_1,x_2,\ldots ,x_n)^T$ be a Perron eigenvector corresponding to $\lambda(K_{\frac{n}{2}, \frac{n}{2}}^+)$.
We partition the vertex set of $ K_{\frac{n}{2}, \frac{n}{2}}^+$ as $\Pi: V(\lambda(K_{\frac{n}{2}, \frac{n}{2}}^+))=
X_1 \cup X_2 \cup Y$, where $X_1=\{u,v\}$ forms an edge,
$X_1\cup X_2$ and $Y$ are partite sets of $K_{\frac{n}{2}, \frac{n}{2}}$.
By comparing the neighborhoods, we can see that $x_u=x_v$,
all coordinates of the vector $\bm{x}$
corresponding to vertices of $X_2$  are equal.
(the coordinates of vertices of $Y$ are equal)
For notational convenience, we may assume that
$x_u=x_v=x$, $x_s=y$ for all $s\in X_2$ and $x_t=z$
for all $t\in Y$. Then
\[  \begin{cases}
\lambda x= x + \frac{n}{2} z, \\
\lambda y = \frac{n}{2}z, \\
\lambda z = 2x + (\frac{n}{2}-2)y.
\end{cases} \]
Thus $\lambda(K_{\frac{n}{2}, \frac{n}{2}}^+)$ is the largest eigenvalue of
\[  B_{\Pi} = \begin{matrix} X_1 \\ X_2 \\ Y
\end{matrix} \begin{bmatrix}
1 & 0 & \frac{n}{2} \\
0 & 0 & \frac{n}{2} \\
2 & \frac{n}{2}-2 & 0
\end{bmatrix}.  \]
By calculation, we know that $\lambda(K_{\frac{n}{2}, \frac{n}{2}}^+)$ is the largest  root of
\[ f(x)= \det (xI_3 - B_{\Pi})=x^3-x^2 - \frac{n^2}{4}x + \frac{n^2}{4}-n.  \]
(b) For odd $n$, the proof is similar with the previous case.
We partition the vertex set of $K_{\frac{n-1}{2}, \frac{n+1}{2}}^+$
as $\Pi': V(K_{\frac{n-1}{2}, \frac{n+1}{2}}^+) = X_1\cup X_2\cup Y$,
where $X_1=\{u,v\}$ forms an edge, $X_1\cup X_2$ and $Y$
are partite sets of $K_{\frac{n-1}{2}, \frac{n+1}{2}}$
satisfying $|X_1| + |X_2|= \frac{n-1}{2}$
and $|Y|=\frac{n+1}{2}$. Then
a similar argument yields that
$\lambda (K_{\frac{n-1}{2}, \frac{n+1}{2}}^+)$ is the largest eigenvalue of
\[  B_{\Pi'}= \begin{matrix} X_1 \\ X_2 \\ Y
\end{matrix} \begin{bmatrix}
1 & 0 & \frac{n+1}{2} \\
0 & 0 & \frac{n+1}{2} \\
2 & \frac{n-1}{2}-2 & 0
\end{bmatrix}.  \]
Thus, $\lambda (K_{\frac{n-1}{2}, \frac{n+1}{2}}^+)$
is the largest root of
\[  g(x)= \det (xI_3- B_{\Pi'})=x^3 - x^2 + \frac{1-n^2}{4} x +
\frac{n^2}{4} - n - \frac{5}{4}.  \]
Finally, we are ready to prove (\ref{eq-n2+2}).
By calculation, we can verify that
\[  f(\sqrt{{n^2}/{4 }+2}) = \sqrt{n^2+8} -n -2<0, \]
and
\[  g(\sqrt{{(n^2-1)}/{4} +2}) = \sqrt{n^2+7} -n-3 <0. \]
This completes the proof.
\end{proof}

In fact, Rayleigh's formula also gives a lower bound
on the spectral radius.
\begin{equation} \label{eq-3}
\lambda (K_{ \lfloor \frac{n}{2} \rfloor, \lceil \frac{n}{2} \rceil}^+)
> \frac{2e(K_{ \lfloor \frac{n}{2} \rfloor, \lceil \frac{n}{2} \rceil}^+)}{n}
= \begin{cases}
\frac{n}{2} + \frac{2}{n}, & \text{if $n$ is even;}\\
\frac{n}{2} + \frac{3}{2n}, & \text{if $n$ is odd.}
\end{cases}
\end{equation}
Therefore, for even $n$, we get from (\ref{eq-3})
that $\lambda^2 > (\frac{n}{2} + \frac{2}{n})^2 >
\frac{n^2}{4} +2$.
On the other hand, for odd $n$, we construct a unit vector
$\bm{x}:=(\frac{1}{\sqrt{n-1}}, \ldots ,\frac{1}{\sqrt{n-1}}, \frac{1}{\sqrt{n+1}}, \ldots ,\frac{1}{\sqrt{n+1}})^T$, where
$\frac{1}{\sqrt{n-1}}$ corresponds to vertices of $X$, and
$\frac{1}{\sqrt{n+1}}$ corresponds to vertices of $Y$, respectively
(see Figure \ref{fig-2}).  Then Rayleigh's formula implies
$\lambda (K_{\frac{n-1}{2}, \frac{n+1}{2}}^+)
\ge \bm{x}^TA(K_{\frac{n-1}{2}, \frac{n+1}{2}}^+) \bm{x} =
\frac{\sqrt{n^2-1}}{2} +
\frac{2}{n-1}$, which yields $\lambda^2 (K_{\frac{n-1}{2}, \frac{n+1}{2}}^+) \ge (\frac{\sqrt{n^2-1}}{2} +
\frac{2}{n-1})^2 > \frac{n^2-1}{4} +2$.
The above argument provides  an alternative proof of (\ref{eq-n2+2}).

\begin{lemma} \label{lem-22}
Let $a,b\ge 2$ be integers with $a+b=n$,
and $K_{a,b}^+$ be the graph obtained from
the complete bipartite graph $K_{a,b}$
by adding an edge in the part of size $a$. Then
\[  \lambda (K_{a,b}^+) \le \lambda (K_{ \lfloor \frac{n}{2} \rfloor, \lceil \frac{n}{2} \rceil}^+), \]
equality holds if and only if $a=\lfloor \frac{n}{2} \rfloor$
and $b=\lceil \frac{n}{2} \rceil$.
\end{lemma}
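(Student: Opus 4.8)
The plan is to reduce the whole statement to comparing the largest roots of a one-parameter family of cubics coming from equitable partitions. First I would observe that, exactly as in the proof of Lemma \ref{lem-21}, the partition $\{u,v\}\cup (A\setminus\{u,v\})\cup B$ of $K_{a,b}^+$ (where $\{u,v\}$ is the embedded edge, $A$ is the part of size $a$ and $B$ the part of size $b$) is equitable, with quotient matrix
\[ B_{a,b}=\begin{bmatrix} 1 & 0 & b \\ 0 & 0 & b \\ 2 & a-2 & 0 \end{bmatrix}. \]
Since $K_{a,b}^+$ is connected, Perron--Frobenius theory guarantees that $\lambda(K_{a,b}^+)$ equals the largest eigenvalue of $B_{a,b}$, that is, the largest root $\rho_{a,b}$ of
\[ \phi_{a,b}(x):=\det(xI_3-B_{a,b})=x^3-x^2-abx+ab-2b. \]
This specializes to $f$ and $g$ of Lemma \ref{lem-21} at the balanced values, which serves as a check.

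Writing $(a_1,b_1)=(\lfloor n/2\rfloor,\lceil n/2\rceil)$ for the claimed optimum and $\rho_1:=\rho_{a_1,b_1}$, the key is the clean identity
\[ \phi_{a_2,b_2}(x)-\phi_{a_1,b_1}(x)=(a_1b_1-a_2b_2)(x-1)+2(b_1-b_2) \]
valid for any admissible pair $(a_2,b_2)$ with $a_2+b_2=n$ and $a_2,b_2\ge 2$. Evaluating at $x=\rho_1$ and using $\phi_{a_1,b_1}(\rho_1)=0$ reduces everything to proving
\[ \phi_{a_2,b_2}(\rho_1)=(a_1b_1-a_2b_2)(\rho_1-1)+2(b_1-b_2)>0 . \]
Because $a_1b_1=\lfloor n^2/4\rfloor$ is the maximal possible product, the first summand is always $\ge 0$ and vanishes only in the balanced case, while $\rho_1-1>0$.

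The sign of the second summand is where the asymmetry of the term $-2b$ enters, and controlling it is the main obstacle. When $b_2\le b_1$ both summands are nonnegative and the second is strictly positive unless $(a_2,b_2)=(a_1,b_1)$, so positivity is immediate. The delicate case is $b_2>b_1$ (the edge sitting in a strictly smaller part), where the second summand is negative and must be dominated by the first. Writing $b_2=b_1+k,\ a_2=a_1-k$ with $1\le k\le a_1-2$, a short computation gives $a_1b_1-a_2b_2=k(b_1-a_1+k)$, whence
\[ \phi_{a_2,b_2}(\rho_1)=k\bigl[(b_1-a_1+k)(\rho_1-1)-2\bigr]. \]
Since $b_1-a_1\in\{0,1\}$ and $k\ge 1$, positivity follows once $\rho_1>3$; the binding instance is $n$ even with $k=1$, and there the Rayleigh lower bound $\rho_1>\tfrac{n}{2}+\tfrac{2}{n}$ from (\ref{eq-3}) gives $\rho_1>3$ for $n\ge 6$. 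This case is empty for $n=4,5$ (it requires $\lfloor n/2\rfloor\ge 3$), and $n=4$ is trivial as no other admissible pair exists.

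Finally, I would upgrade $\phi_{a_2,b_2}(\rho_1)>0$ to the desired root inequality $\rho_1>\rho_{a_2,b_2}$. The three roots of $\phi_{a_2,b_2}$ are real (they are eigenvalues of a symmetric adjacency matrix), and $\phi_{a_2,b_2}(2)=4-b_2(a_2+2)<0$ for $a_2,b_2\ge 2$ places the value $2$ between the middle and largest roots, so the middle root lies below $2$. As $\rho_1>\tfrac{n}{2}\ge 2$, the point $\rho_1$ sits above the middle root, and positivity of $\phi_{a_2,b_2}$ there forces $\rho_1$ to exceed the largest root $\rho_{a_2,b_2}$. Hence $\lambda(K_{a,b}^+)=\rho_{a,b}<\rho_1=\lambda(K_{\lfloor n/2\rfloor,\lceil n/2\rceil}^+)$ for every admissible $(a,b)\ne(a_1,b_1)$, with equality exactly at $a=\lfloor n/2\rfloor$, $b=\lceil n/2\rceil$, as claimed.
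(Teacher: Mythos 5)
Your proof is correct apart from one easily repaired step, and it follows a genuinely different route from the paper's. Both arguments begin identically, using the equitable partition to show that $\lambda(K_{a,b}^+)$ is the largest root of $f_{a,b}(x)=x^3-x^2-abx+ab-2b$. From there the paper argues by local moves: if $a\ge b+1$ it shows $\lambda(K_{a,b}^+)<\lambda(K_{a-1,b+1}^+)$, and if $b\ge a+2$ it shows $\lambda(K_{a,b}^+)<\lambda(K_{a+1,b-1}^+)$, in each case by evaluating the difference of two \emph{adjacent} cubics at $\lambda(K_{a,b}^+)$ (using the crude bounds $\lambda\ge 2$, respectively $\lambda\ge\lambda(K_2\vee I_4)>3$) and then iterating the one-vertex shifts toward balance. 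You instead compare an arbitrary admissible pair with the balanced one in a single step, via the identity $\phi_{a_2,b_2}(x)-\phi_{a_1,b_1}(x)=(a_1b_1-a_2b_2)(x-1)+2(b_1-b_2)$, which in the only delicate case $b_2>b_1$ collapses to the explicit certificate $k\bigl[(b_1-a_1+k)(\rho_1-1)-2\bigr]$, settled by the Rayleigh bound $\rho_1>n/2+2/n>3$ for $n\ge 6$ (the case being vacuous for $n\le 5$). There is a trade-off worth noting: the paper's direction of comparison (showing the better graph's polynomial is \emph{negative} at the worse graph's root) concludes immediately, since a cubic is positive beyond its largest root; your direction (showing the worse graph's polynomial is \emph{positive} at $\rho_1$) additionally forces you to locate $\rho_1$ relative to the middle root, and that is exactly where your one weak point lies.

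That weak point: from $\phi_{a_2,b_2}(2)<0$ alone you cannot conclude that $2$ lies between the middle and largest roots, because a negative value at $2$ is equally consistent with all three roots exceeding $2$. This is easy to exclude: the three roots sum to $1$ (the trace of the quotient matrix $B_{a_2,b_2}$, equivalently minus the coefficient of $x^2$ in $\phi_{a_2,b_2}$), so they cannot all exceed $2$. With that one line added, your localization $r_2<2<\rho_1$ is valid, the implication $\phi_{a_2,b_2}(\rho_1)>0\Rightarrow\rho_1>\rho_{a_2,b_2}$ goes through, and the proof is complete, including the characterization of equality.
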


\begin{proof}
Similar with the proof of Lemma \ref{lem-21}, we know that
$\lambda (K_{a,b}^+)$ is the largest root of
\[  f_{a,b}(x)=x^3-x^2 -abx +ab -2b. \]
It is sufficient to prove  the following two cases.

{\bf Case 1.}
If $a\ge b+1$, then
our goal is to prove that
$\lambda (K_{a,b}^+) < \lambda (K_{a-1,b+1}^+)$.
By computation, we obtain
\[ f_{a,b}(x)- f_{a-1,b+1}(x)= x(a-b-1) -a+b+3.  \]
If $a=b+1$, then $f_{a,b}(x)=f_{a-1,b+1}(x)+2$,
and $f_{a-1,b+1}(x) < f_{a,b}(x)$ for every $x \in \mathbb{R}$;
if $a\ge b+2$,
then we get $f_{a-1,b+1}(x) < f_{a,b}(x)$ for every $x> \frac{a-b-3}{a-b-1}$.
Note that $\lambda(K_{a,b}^+)\ge \lambda (K_3)=2$.
Therefore, we obtain $f_{a-1,b+1}(\lambda(K_{a,b}^+)) <
f_{a,b}(\lambda(K_{a,b}^+))=0$.
Recall that $\lambda(K_{a-1,b+1}^+)$
is the largest root of $f_{a-1,b+1}(x)$. Thus
we get $\lambda(K_{a,b}^+) < \lambda(K_{a-1,b+1}^+)$.

{\bf Case 2.}
If $a+2\le b$, then we need to prove $\lambda (K_{a,b}^+) <
\lambda (K_{a+1,b-1}^+)$.
Similarly, we have
\[  f_{a,b}(x)- f_{a+1,b-1}(x) = x(b-a-1) + a-b-1. \]
In view of $b\ge a+2$,  for every $x>\frac{b-a+1}{b-a-1}$,
we get $f_{a+1,b-1}(x) < f_{a,b}(x)$.
Since $a\ge 2$ and $b\ge 4$, we can see that
$K_2\vee I_4$ is a subgraph of $K_{a,b}^+$.
Then $\lambda (K_{a,b}^+) \ge \lambda (K_2\vee I_4)
\approx 3.372$. Since $\frac{b-a+1}{b-a-1}= 1+ \frac{2}{b-a-1}\le 3$,
we obtain $f_{a+1,b-1}(\lambda (K_{a,b}^+)) < f_{a,b}(\lambda (K_{a,b}^+)) =0$, which  leads to
$\lambda (K_{a,b}^+) < \lambda (K_{a+1,b-1}^+)$, as desired.
\end{proof}

Now, we are in a position to prove Theorem \ref{thm-n-F2}.

\begin{proof}[{\bf Proof of Theorem \ref{thm-n-F2}}]
Let $G$ be an $F_2$-free graph on $n\ge 7$ vertices
with maximum spectral radius.
Then $\lambda (G)\ge \lambda (K_{ \lfloor \frac{n}{2} \rfloor, \lceil \frac{n}{2} \rceil}^+)$.
The aim of the proof is to show that
$G=K_{ \lfloor \frac{n}{2} \rfloor, \lceil \frac{n}{2} \rceil}^+$.
Clearly, one can see that $G$ is connected. Otherwise,
choosing $G_1$ and $G_2$
as two components with $\lambda(G_1) = \lambda (G)$,
and adding an edge between $G_1$ and $G_2$, we get a new graph
on $n$ vertices, which is still $F_2$-free and has larger spectral radius than
$G$, a contradiction.
By Lemma \ref{lem-21}, 
\begin{equation}  \label{eq-lower}
\lambda^2(G) > \lfloor n^2/4\rfloor +2.
\end{equation}
Let $X=(x_1,x_2,\ldots ,x_n)^T$ be a Perron vector
of $G$ and $u\in V(G)$ be a vertex such that $x_u=\max\{x_v: v\in V(G)\}$.
For notational convenience, we denote
$\lambda =\lambda (G)$, $A=N_G(u)$ and $B=
V(G) \setminus (A\cup \{u\})$. Then $|A| + |B| +1=n$.

To state the proof in details,
we outline the main steps as follows.
First of all, we shall show that $G[A]$ is a star with center $u_0$, and $G[B]$ is empty,
and the edges between $A\setminus \{u_0\}$
and $B$ form a complete bipartite graph.
Secondly, we will prove that
$|A|=\lceil \frac{n}{2}\rceil +1$ and $|B|=\lfloor \frac{n}{2}\rfloor -2$.
We write $e(A)$ for the number of edges
with two endpoints in $A$,
and $e(A,B)$ for the number of edges with one endpoint in $A$
and the another in $B$; see Figure \ref{fig-structure}.

\begin{figure}[H]
\centering
\includegraphics[scale=0.85]{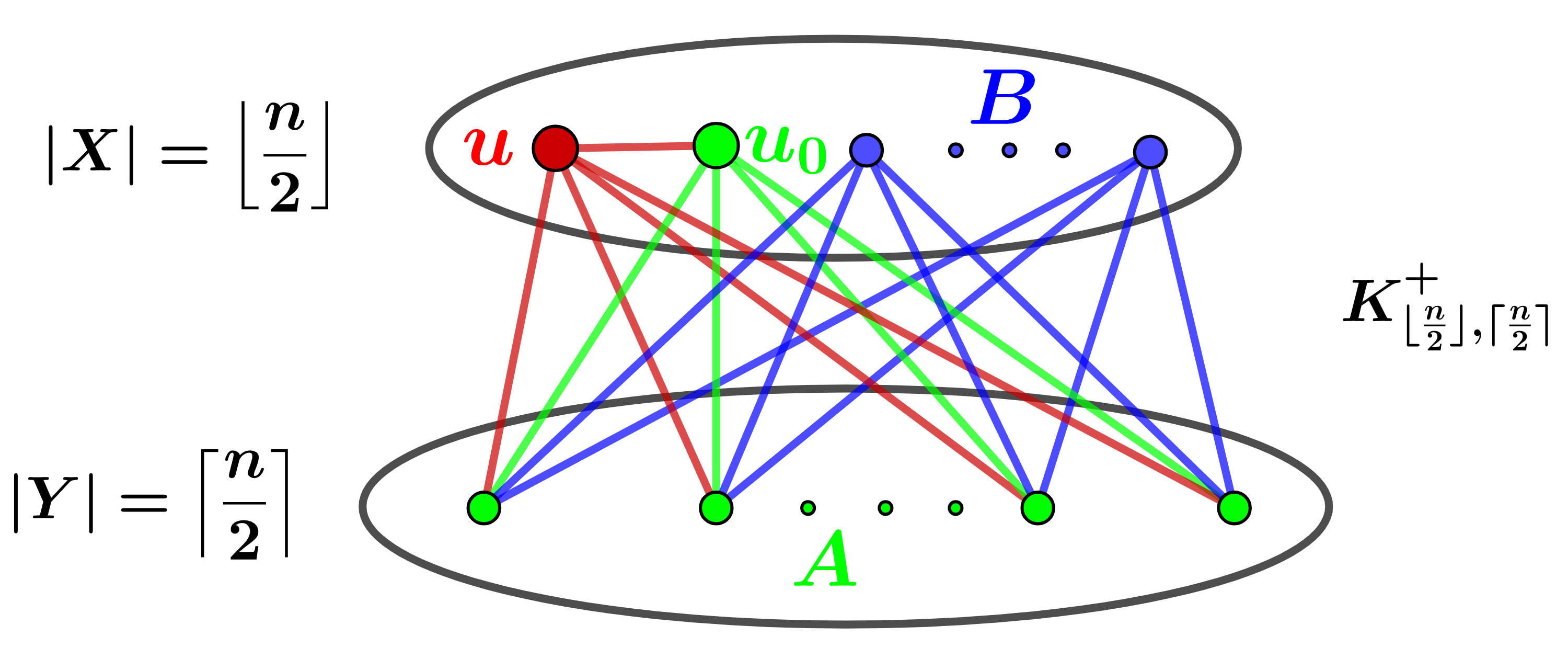}
\caption{The structure of the extremal graph.}
\label{fig-structure}
\end{figure}

\medskip
{\bf Claim 1. } {\it $|A| \ge \lambda$. Furthermore,
we have $|A| \ge \lceil \frac{n}{2}\rceil$ and $|B| \le \lfloor \frac{n}{2}\rfloor -1$. }

\begin{proof}[Proof of Claim 1]
By the maximality of  $x_u$,
we get $\lambda x_u = \sum_{v\in A} x_v \le |A| x_u$.
Hence $\lambda \le |A|$. By inequality (\ref{eq-3}), we obtain
$\lambda > \frac{n}{2} + \frac{3}{2n}$. As $|A|$ is an integer,
we have $|A|\ge \lceil \frac{n}{2}\rceil$.
Keeping in mind that $|A| + |B| =n-1$. Then $|B| \le \lfloor \frac{n}{2}\rfloor -1$.
\end{proof}

\medskip
{\bf Claim 2. } {\it $e(A)\ge 1$ and $|B|\ge 1$.}

\begin{proof}[Proof of Claim 2]
Suppose on the contrary that $e(A)=0$.
Denote by $d_A(v)$ the number of neighbors of $v$ in
vertex set $A$. Namely, $d_A(v)=|N_G(v) \cap A|$.
It is known that
\begin{equation}  \label{eq-lambda-2}
\lambda^2 x_u = \sum_{v\in A} \sum_{w\in N(v)} x_w
= |A|x_u + \sum_{v\in A} d_A(v)x_v + \sum_{w\in B} d_A(w)x_w.
\end{equation}
Roughly speaking, the equality (\ref{eq-lambda-2})
provides a nice estimate of $\lambda (G)$
for nearly-regular graphs, since all coordinates of the
Perron vector of such graphs are almost equal.
Observe that $\sum_{w\in B} d_A(w)= e(A,B) \le |A||B|$. Then
(\ref{eq-lambda-2}) yields
\[ \lambda^2 x_u =|A| x_u + \sum_{w\in B} d_A(w)x_w
\le |A| (1+ |B|)x_u .  \]
Note that $|A| + |B| +1=n$. Then AM-GM inequality gives 
$\lambda^2 \le \lfloor n^2/4 \rfloor$,
which contradicts with (\ref{eq-lower}).
Thus, we have proved $e(A)\ge 1$.

Suppose on the contrary that $|B|=0$.
Then $|A|=n-1$.
Since $G$ is $F_2$-free, we see that $G[A]$ has no $2K_2$,
and hence $G[A]$ is $P_4$-free (the path on $4$ vertices).
Except for the isolated vertices, the induced subgraph $G[A]$ is
connected. Hence either $G[A]=K_3\cup I_{|A|-3}$
or $G[A]=K_{1,t}
\cup I_{|A|-t-1}$ for some $t\ge 1$.
In the first case, we obtain
$\lambda \le \lambda (K_1\vee (K_3 \cup I_{n-4}))$,
which is smaller than $\frac{n}{2}$. Indeed, we know that
$\lambda (K_1\vee (K_3 \cup I_{n-4}))$ is the largest root of
\[  p(x)= x^3-2x^2 + (1-n)x +2n-8. \]
It follows that $p(\frac{n}{2}) = \frac{n^3}{8} - n^2 + \frac{5 n}{2} - 8>0$
for every $n\ge 7$. Moreover,
we have $p'(x)=3x^2 -4x + (1-n)>0$
for every $x\ge \frac{n}{2}$.
Therefore, we have $p(x)>0$
for every $x\ge \frac{n}{2}$, which implies
$\lambda (K_1\vee (K_3 \cup I_{n-4})) < \frac{n}{2}$, as needed.
In the second case,
we have $\lambda \le \lambda (K_2\vee I_{n-2})
=  \frac{1}{2}(1 + \sqrt{8n-15}) \le \frac{n}{2} <
\lambda (K_{ \lfloor \frac{n}{2} \rfloor, \lceil \frac{n}{2} \rceil}^+)$ for every $n\ge 8$, a contradiction.
For the case $n=7$,
a direct computation gives
$\lambda (K_2\vee I_5) \approx 3.701$
and $\lambda (K_{3,4}^+) \approx 3.848$,
so we get $\lambda \le \lambda (K_2\vee I_5) < \lambda (K_{3,4}^+)$.
This is also a contradiction.
\end{proof}

Next, we partition the remaining proof in two cases.

\medskip
{\bf Case 1.} First of all, we consider the case $G[A]=K_3\cup I_{|A| -3}$.
Denote by $\{u_1,u_2,u_3\}$ the vertex of the copy of $K_3$.
Every vertex of $B$ has at most one neighbor in $\{u_1,u_2,u_3\}$.
Then $e(A,B) \le |B| (|A|-2)$ and
\[  \lambda^2 x_u \le
(|A| + 2e(A) + e(A,B))x_u \le
(|A| (|B|+1) - 2|B| +6)x_u.  \]
By computation, the maximum of $|A| (|B|+1) - 2|B|$
attains at $|B|= \lfloor \frac{n}{2}\rfloor -2$.
Thus, we get $\lambda \le (\lceil \frac{n}{2}\rceil +1)(\lfloor \frac{n}{2}\rfloor -1) -2 (\lfloor \frac{n}{2}\rfloor -2) +6$,
which yields
$\lambda \le \lfloor \frac{n^2}{4}\rfloor +1$ for every $n\ge 8$,
and  $\lambda \le \lfloor \frac{n^2}{4}\rfloor +2$ for $n=7$.
This is a contradiction with (\ref{eq-lower}).

{\bf Case 2.} Now, we deal with the remaining case $G[A]=K_{1,t}
\cup I_{|A| -t-1}$
for some $t\ge 1$.
Denote by $u_0$ the central vertex of the star $K_{1,t}$ in $G[A]$,
and $\{u_1,u_2,\ldots ,u_t\}$ the leaves of $K_{1,t}$.
If $t=1$, then $e(A,B)\le |A| |B|$ and 
$\lambda^2 \le |A| +2t + e(A,B) \le 
|A|(1+|B|)+2 \le \lfloor n^2/4 \rfloor +2$, 
which contradicts with (\ref{eq-lower}). 
In what follows, we will consider the case $t\ge 2$ and 
proceed the proof in two subcases.

{\bf Subcase 2.1.}
Suppose that $u_0$ has a neighbor in $B$, say $w_0\in B$.
Since $t\ge 2$ and $G$ is $F_2$-free, we observe that $w_0u_i \notin E(G)$ for each $i\in \{1,2,\ldots ,t\}$.
In addition, every vertex  $w\in B$ satisfies $d_A(w) \le |A| -1$.
Then $e(A,B) \le (|A| -t) + (|A|-1) (|B| -1)$ and
\[  \lambda^2 \le |A| +2t +e(A,B)
= (|A| -1)(|B| +1) +2 +t. \]
Note that $t \le |A| -1$ and $|A| + |B| +1=n$.
Thus, we get
$\lambda^2 \le (|A| -1)(|B|+2) +2
\le \lfloor {n^2}/{4} \rfloor +2$, a contradiction.

{\bf Subcase 2.2.}
Finally, we consider the case $d_B(u_0)=0$.
In this case,
we claim that $e(B)=0$.
Otherwise, if $ww'$ is an edge in $G[B]$,
then for each $i\in \{1,2,\ldots ,t\}$,
the vertex $u_i$ has at most one neighbor in $\{w,w'\}$,
and thus $d_B(u_i) \le |B|-1$.
Then $e(A,B) \le (|A| -t-1) |B| + t (|B|-1)= |A||B| - |B| -t$.
Combining with $t\le |A| -1$, we obtain
\[  \lambda^2 \le |A| + 2t +e(A,B) \le
|A|  + |A||B| -|B| +t \le (|A| -1) (|B| +2) +1. \]
Due to $|A| + |B| +1=n$, we get
$\lambda^2 \le
\lfloor n^2/4\rfloor +1$,
which is a contradiction. Therefore, we have proved that $e(B)=0$.

Since $G$ has the maximum spectral radius.
Then for each $w\in B$, we have $N_A(w)= A \setminus \{u_0\}$.
Furthermore, the maximality also implies that $t=|A|-1$.
Therefore, we obtain $V(G)=\{u\} \cup A \cup B$,
where $A=\{u_0,u_1,\ldots ,u_t\}$ forms a star
and $B$ is an independent set.
Moreover,  $A \setminus \{u_0\}$ and $B$ form a complete bipartite graph.
Putting $\{u,u_0\}$ and $B$ together,
the graph $G$ can be obtained from
the complete bipartite graph $K_{|B|+2,|A|-1}$
by adding an edge into the part of size $|B|+2$.
Recall that $|A| \ge \lceil \frac{n}{2}\rceil$ and $|B| \le \lfloor \frac{n}{2}\rfloor -1$.
By Lemma \ref{lem-22},
we can see that when $|A| = \lceil \frac{n}{2}\rceil +1$,
the graph $G$ attains maximum spectral radius,
and then we get $G=K_{ \lfloor \frac{n}{2} \rfloor, \lceil \frac{n}{2} \rceil}^+$, as required.
\end{proof}

\section{Proof of Theorem  \ref{thm-m-F2}}
\label{sec3}

Before showing the proof of Theorem  \ref{thm-m-F2},
we start with the following lemma.

\begin{figure}[H]
\centering
\includegraphics[scale=0.85]{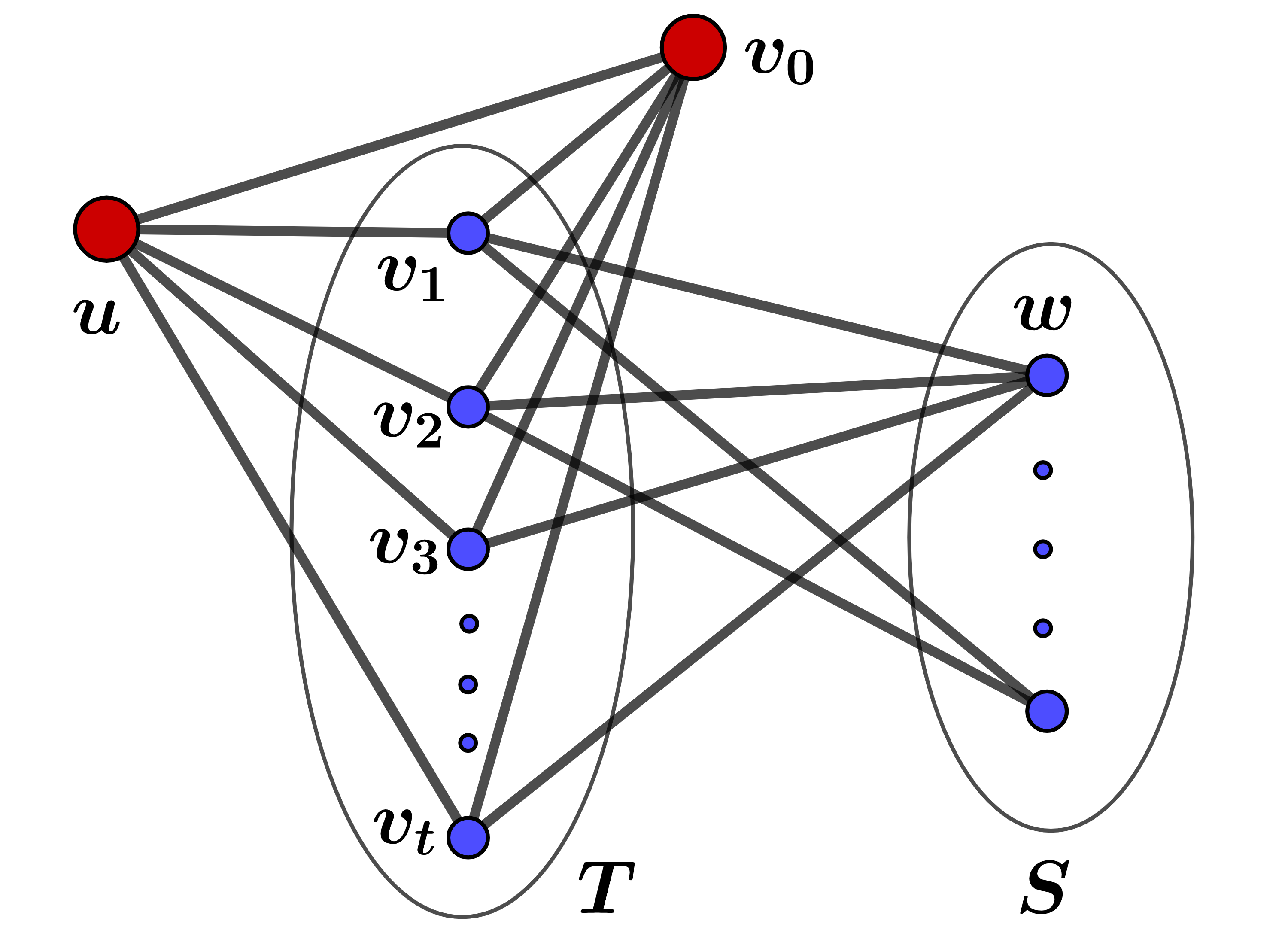}
\caption{An approximate structure of the graph  $G$.}
\label{fig-4}
\end{figure}

\begin{lemma} \label{lem31}
Let $G$ be an $m$-edge graph obtained from a bipartite graph
$B_{s,t}$ by adding two new vertices $u$ and $v_0$,
an edge $uv_0$ and all edges between $\{u,v_0\}$ and $\{v_1,\ldots ,v_t\}$. Then
\[ \lambda (G)< \frac{1+ \sqrt{4m-3}}{2}.  \]
\end{lemma}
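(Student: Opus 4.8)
The plan is to reduce the statement to a single quadratic inequality and then exploit the near-bipartite structure of $G$ together with the symmetry of the two apex vertices $u,v_0$. Observe first that $\frac{1+\sqrt{4m-3}}{2}$ is exactly the largest root of $p(x)=x^2-x-(m-1)$, i.e.\ the spectral radius of $K_2\vee I_{(m-1)/2}$. Since $p$ is an upward parabola whose smaller root is nonpositive and $\lambda(G)>0$, proving $\lambda(G)<\frac{1+\sqrt{4m-3}}{2}$ is equivalent to proving the strict inequality $\lambda^2-\lambda<m-1$. Counting edges gives $m=1+2t+e(B_{s,t})$ (the edge $uv_0$, the $2t$ edges joining $\{u,v_0\}$ to $\{v_1,\dots,v_t\}$, and the $e(B_{s,t})$ edges of the bipartite graph), so the target becomes $\lambda^2-\lambda<2t+e(B_{s,t})$.

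I would then fix a Perron eigenvector $\mathbf{z}$ of $G$, assuming as usual that $B_{s,t}$ carries at least one edge so that, after discarding isolated vertices, $G$ is connected; the degenerate case $e(B_{s,t})=0$ is precisely $K_2\vee I_t$ and gives equality, which is exactly why an edge in $B_{s,t}$ is needed for the strict bound. Because $u$ and $v_0$ are adjacent and share the same remaining neighbourhood $\{v_1,\dots,v_t\}$, the transposition $u\leftrightarrow v_0$ is an automorphism of $G$; by the uniqueness of the Perron vector I conclude $z_u=z_{v_0}$, and normalize $z_u=z_{v_0}=1$. The eigenvalue equation at $u$ then reads $\lambda=\lambda z_u=z_{v_0}+\sum_{i=1}^t z_{v_i}=1+\sum_{i=1}^t z_{v_i}$, hence $\sum_{i=1}^t z_{v_i}=\lambda-1$. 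The equation at each $v_i$ reads $\lambda z_{v_i}=z_u+z_{v_0}+\sum_{w\sim v_i}z_w=2+\sum_{w\sim v_i}z_w$, and at each $w\in\{w_1,\dots,w_s\}$ it reads $\lambda z_w=\sum_{v_i\sim w}z_{v_i}$, since every neighbour of $w$ lies among the $v_i$.

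The heart of the argument is a short computation of $\lambda^2-\lambda$ followed by one decisive comparison. Multiplying $\lambda z_u=1+\sum_i z_{v_i}$ by $\lambda$ and substituting the $v_i$-equations, I expect
\[
\lambda^2=\lambda+\sum_{i=1}^t\lambda z_{v_i}=\lambda+2t+\sum_{i=1}^t\sum_{w\sim v_i}z_w=\lambda+2t+\sum_{w}d(w)\,z_w,
\]
where $d(w)=\deg_G(w)$ is the degree of $w$ (all of whose neighbours lie in $\{v_1,\dots,v_t\}$), so that $\lambda^2-\lambda=2t+\sum_w d(w)z_w$ and $\sum_w d(w)=e(B_{s,t})$. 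The crucial estimate is that every far vertex carries strictly smaller weight than an apex: from the $w$-equation, $\lambda z_w=\sum_{v_i\sim w}z_{v_i}\le\sum_{i=1}^t z_{v_i}=\lambda-1$, whence $z_w\le\frac{\lambda-1}{\lambda}<1=z_u$, using $\lambda\ge\lambda(K_3)=2>1$ since $G$ contains a triangle. I regard this comparison $z_w<z_u$ as the main point of the proof, and the only place genuinely requiring care; everything else is bookkeeping, and notably I never need $z_u$ to be the global maximum entry.

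Finally, because $z_w<1$ for every $w$ and $\sum_w d(w)=e(B_{s,t})\ge 1$, at least one term with $d(w)>0$ is strictly below its maximal value, giving
\[
\lambda^2-\lambda=2t+\sum_w d(w)z_w<2t+\sum_w d(w)=2t+e(B_{s,t})=m-1,
\]
which is exactly the desired quadratic inequality and hence $\lambda(G)<\frac{1+\sqrt{4m-3}}{2}$. The steps demanding attention are the symmetry justification $z_u=z_{v_0}$ and, above all, the weight comparison $z_w<z_u$; the one pitfall is the degenerate configuration $e(B_{s,t})=0$, where the inequality degrades to equality, so the standing assumption that $B_{s,t}$ contains an edge must be explicitly invoked.
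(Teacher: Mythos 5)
Your proof is correct and follows essentially the same route as the paper's: both reduce the claim to the quadratic inequality $\lambda^2-\lambda<m-1$, use $x_u=x_{v_0}$, the identity $\sum_{i=1}^t x_{v_i}=(\lambda-1)x_u$, the expansion of $\lambda^2 x_u$ through the $v_i$-equations, and the bound $x_w\le \frac{\lambda-1}{\lambda}x_u$ on the far vertices. Your explicit observation that strictness fails in the degenerate case $e(B_{s,t})=0$ (where $G=K_2\vee I_t$ attains equality) is a caveat the paper's proof also implicitly relies on --- its final strict inequality likewise needs $m-2t-1>0$ --- but never states; in the paper's application this is harmless since every vertex of $B$ has degree at least $2$ there.
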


For convenience of readers, we draw the graph $G$ in Figure \ref{fig-4}.

\begin{proof}
Since the structure of $B_{s,t}$ is stochastic,
it seems impossible to get the required bound by a straightforward calculation.
The key idea of the proof depends on a trick analysis
of the Perron eigenvector of $\lambda (G)$.
Since $u$ and $v_0$ has the same neighborhood in $G$,
we have $x_u = x_{v_0}$.
In what follows, we will establish
an upper bound on $\lambda^2 x_u$ and
then show that $\lambda (G) <
\frac{1+\sqrt{4m-3}}{2}$. Since $\lambda x_u = x_{v_0} +
\sum_{i=1}^t x_{v_i}$, we get
\[  \sum_{i=1}^t x_{v_i} = (\lambda -1)x_u. \]
Moreover, we have
\begin{align*}
\lambda^2 x_u &= \lambda x_{v_0} + \sum_{i=1}^t
\lambda x_{v_i}
= x_u + \sum_{i=1}^t x_{v_i} +
\sum_{i=1}^t \left(x_u + x_{v_0} + \sum_{w\in N(v_i)\cap B} x_w\right) \\
&= (2t+1) x_u + \sum_{i=1}^t x_{v_i} + \sum_{w\in B} d_A(w) x_w.
\end{align*}
Observe that $\lambda x_w= \sum_{v\in N(w)} x_v
\le \sum_{i=1}^t x_{v_i}$,
we get $x_w \le \frac{1}{\lambda} \sum_{i=1}^t x_{v_i}$. Then
\begin{align*}
\lambda^2 x_u &\le (2t+1) x_u +
\left( 1+ \frac{1}{\lambda} \sum_{w\in B} d_A(w) \right)
\left( \sum_{i=1}^t x_{v_i} \right) \\
&= (2t+1) x_u + \left(1+\frac{m-2t-1}{\lambda} \right) (\lambda -1)x_u \\
&< m x_u + (\lambda -1)x_u.
\end{align*}
Thus, we get $\lambda^2- \lambda - (m-1) <0$,
which yields $\lambda < \frac{1+\sqrt{4m-3}}{2}$.
\end{proof}

In the sequel,
we will provide the proof of Theorem \ref{thm-m-F2}.

\begin{proof}[{\bf Proof of Theorem \ref{thm-m-F2}}]
Let $G$ be an $F_2$-free graph with $m\ge 8$ edges
and maximum spectral radius.
We may assume that $G$ satisfies $\lambda (G) \ge \frac{1+\sqrt{4m-3}}{2} $.
Our goal is to show that $G=K_2\vee I_{\frac{m-1}{2}}$.
Let $\bm{x}=(x_1,x_2,\ldots ,x_n)^T$ be the unit Perron vector
of $G$ and $u\in V(G)$ be a vertex such that
$x_u =\max \{x_v : v\in V(G)\}$.
Moreover, let $A=N_G(u)$ and $B=V(G) \setminus (\{u\}\cup A)$.

In our proof, we will frequently use two facts.
The first fact admits that $G$ is connected.
Otherwise, we can choose $G_1$ and $G_2$ as two different components,
where $G_1$ attains the spectral radius of $G$,
deleting an edge within $G_2$, and then adding an edge between $G_1$ and $G_2$, we get a new $m$-edge graph which is still $F_2$-free and has larger spectral radius,
which is a contradiction.

The second fact asserts that $d(w) \ge 2$ for each vertex $w\in B$.
Otherwise, if $w\in B$ has degree $1$, say $wv\in E(G)$,
where $v\in V(G)$ and $v\neq u$,
then replacing the edge $wv$ with $wu$,
we get a new $m$-edge $F_2$-free graph $G'$ with larger spectral radius.
Indeed, since $x_v\le x_u$, we have
\begin{equation}  \label{degree-two}
\lambda (G') \ge \bm{x}^TA(G') \bm{x} = 2\left(\sum_{\{i,j\}\in E(G)}
x_ix_j \right)- 2x_wx_v + 2x_wx_u \ge \lambda (G).
\end{equation}
In fact, we claim that $\lambda (G') > \lambda (G)$.
Assume on the contrary that
$\lambda (G')= \lambda(G)$. Then all inequalities in
(\ref{degree-two}) become equalities,
and $\bm{x}$ is also a unit eigenvector of $\lambda (G')$.
Namely, $A(G')\bm{x}=\lambda (G') \bm{x} = \lambda (G) \bm{x}$.
Consider the vertex $u$, we observe that
$\lambda (G')x_u = \sum_{v\in N_G(u)} x_v + x_w >
\sum_{v\in N_G(u)} x_v = \lambda (G)x_u$.
Consequently, we get $\lambda (G') > \lambda (G)$,
which contradicts with our assumption.

\medskip
{\bf Claim 1. } {\it $e(A)\ge 1$.}

\begin{proof}[Proof of Claim 1]
Assume on the contrary that $e(A)=0$.  Thus,
\begin{align*}
\lambda^2 x_u = \sum_{v\in A} \sum_{w\in N(v)} x_w
&= |A|x_u + \sum_{v\in A} d_A(v) x_v + \sum_{w\in B} d_A(w) x_w \\
&\le (|A| + e(A,B))x_u \le m x_u,
\end{align*}
where the last inequality holds since
$m=e(G)=|A| + e(A,B) + e(B)$.
It follows that $\lambda \le \sqrt{m}$.
Observe that $\lambda (G) \ge \frac{1+\sqrt{4m-3}}{2}  > \sqrt{m}$. This leads to a contradiction.
\end{proof}

\medskip
{\bf Claim 2. } {\it $G[A]=K_3 \cup I_{|A|-3}$
or $G[A]=K_{1,t}\cup I_{|A|-t-1}$ for some $t\ge 1$.}

\begin{proof}[Proof of Claim 2]
Since $G$ is $F_2$-free and $e(A)\ge 1$ by Claim 1,
we can see that
the induced subgraph $G[A]$ is $2K_2$-free, so it
consists of only one connected
component and some isolated vertices.
If $e(A)=1$, then it is clear that $G[A]=K_{1,1} \cup I_{|A|-2}$.
For the case $e(A)\ge 2$,
observe that the longest path in $G[A]$ is
$P_3$ on $3$ vertices,
the only component in $G[A]$ is either the cycle $K_3$,
or the star $K_{1,t}$ for some $t\ge 2$.
\end{proof}

We denote $A_+=\{v\in A: d_A(v) \ge 1\}$
and $A_0= A \setminus A_+$.
In other words, $A_0$ is the set of isolated vertices
in the induced subgraph $G[A]$.
In what follows, we present the proof in two cases.

\medskip
{\bf Case 1.} $B=\varnothing$. In this case,
we have $\lambda x_u = \sum_{v\in A} x_v$ and
\[  \lambda^2x_u = \sum_{v\in A} \lambda x_v = |A| x_u + \sum_{v\in A} d_A(v)x_v. \]
Thus, we get
\[ (\lambda^2 - \lambda )x_u = |A|x_u +
\sum_{v\in A_+} (d_A(v) -1)x_v - \sum_{v\in A_0} x_v. \]
Note that $x_u$ is the maximum coordinate in the Perron vector. It follows that
\[ \lambda^2 - \lambda \le |A| + 2e(A_+) - |A_+|
- \sum_{v\in A_0} \frac{x_v}{x_u} . \]
Notice that $\lambda \ge \frac{1+\sqrt{4m-3}}{2}$,
which implies $\lambda^2 - \lambda \ge m-1
= |A| +e(A_+) -1$. Then
\begin{equation} \label{eq5}
\sum_{v\in A_0} \frac{x_v}{x_u} \le
e(A_+) - |A_+| +1.
\end{equation}
By Claim 2, if $G[A]=K_3 \cup I_{|A| -3}$,
then $|A|=m-3$ and $G=K_1\vee (K_3 \cup I_{m-6})$.
By computation, we know that $\lambda (G)$
is the largest root of
\[ h(x)=x^3 - 2x^2 + 3x -mx +2m-12.  \]
It is easy to check that $h(\frac{1+\sqrt{4m-3}}{2})
>0$ for every integer $m\ge 8$.
Additionally, we have $h'(x)=3x^2 -4x +3-m >0$
for every $x\ge \frac{1+\sqrt{4m-3}}{2}$.
Hence, we obtain $\lambda (G) < \frac{1+\sqrt{4m-3}}{2}$,
a contradiction. Therefore, Claim 2 implies
$G[A]=K_{1,t} \cup I_{|A|-t-1}$ for some $t\ge 1$.
Then we get $e(A_+)- |A_+| =-1$.
Combining with (\ref{eq5}),
we have  $A_0= \varnothing$ and $t=|A|-1=\frac{m-1}{2} $,
then $G=K_1\vee K_{1,t}= K_2 \vee I_{\frac{m-1}{2}}$,
which is the desired extremal graph.

{\bf Case 2.} $B\neq \varnothing$.
Since $\lambda^2 x_u = |A| x_u +
\sum_{v\in A} d_A(v)x_v + \sum_{w\in B} d_A(w) x_w$,
we get
\[  (\lambda^2 - \lambda ) x_u
= |A| x_u + \sum_{v\in A_+} (d_A(v) -1)x_v - \sum_{v\in A_0} x_v + \sum_{w\in B} d_A(w) x_w.  \]
Invoking the fact $\sum_{w\in B} d_A(w) x_w \le e(A,B)x_u$, we have
\[  \lambda^2- \lambda \le |A| + 2e(A_+) - |A_+|
- \sum_{v\in A_0} \frac{x_v}{x_u} + e(A,B). \]
Note that $\lambda^2 - \lambda \ge m-1
= |A| + e(A_+) + e(A,B) +e(B) -1$. Then
\begin{equation} \label{eq-6}
e(B)\le e(A_+) - |A_+| - \sum_{v\in A_0} \frac{x_v}{x_u} +1.
\end{equation}
Next, we partition the remaining proof in two subcases.

{\bf Subcase 2.1.}
If $G[A]=K_{1,t}\cup I_{|A|-t-1}$ for some $t\ge 1$,
then $e(A_+) = |A_+| -1$.
Combining with (\ref{eq-6}), we get
$e(B)=0$ and $A_0 = \varnothing$.
Denote by $A=\{v_0,v_1,\ldots ,v_t\}$
the vertex set of the star $K_{1,t}$,
where $v_0$ is the center vertex.
If $t=1$, then $A=\{v_0,v_1\}$
forms an edge.
Recall that each vertex of $B$ has degree at least $2$, we have $N(w)=\{v_0,v_1\}$ for every $w\in B$.
Then $x_{v_0}=x_{v_1} > x_u$,
which contradicts with the maximality of $x_u$.
Next, we consider the case $t\ge 2$.
We claim that $d_B(v_0) =0$.
Otherwise, if $v_0w \in E(G)$ for some $w\in B$,
then $N(w) =\{v_0\}$
since $G$ is $F_2$-free.
This leads to a contradiction because each vertex of $B$ has
degree at least $2$.
Therefore, for every $w\in B$, we have $N(w) \subseteq
\{v_1,v_2, \ldots ,v_t\}$.
In other words, the vertex sets $A\setminus \{v_0\}$ and $B$ form
a (not necessarily complete) bipartite subgraph in $G$;
see the previous Figure \ref{fig-4}.
By Lemma \ref{lem31},
it follows that $\lambda < \frac{1+\sqrt{4m-3}}{2}$,
a contradiction.

\medskip
{\bf Subcase 2.2.}
If $G[A]=K_3\cup I_{|A|-3}$, then $e(A_+)= |A_+|=3$.
So we obtain from (\ref{eq-6}) that $0\le e(B)\le 1-\sum_{v\in A_0} {x_v}/{x_u}$.
Thus, we get two possibilities, namely,
\[ 
\text{either  $e(B)\in \{0,1\}$ if $A_0 = \varnothing$,
or $e(B)=0$ if $A_0 \neq \varnothing$.} \]

In the former case, that is, $A_0 = \varnothing$,
then $G[A]$ is a copy of $K_3$.
In fact, the graph $G$ can be determined uniquely. Indeed,
since $G$ is $F_2$-free, we notice that
each vertex of $B$ has at most one neighbor in $A$.
Recall that $G$ is connected and $d(w)\ge 2$ for every $w\in B$.
Therefore, $e(B)=1$ and $|B|=2$, so $m=9$ and
$G$ is described as below.
Denote by $A=\{v_1,v_2,v_3\}$
and $B=\{w_1,w_2\}$, where $\{u,v_1,v_2,v_3\}$
forms a copy of $K_4$ and $v_1w_1w_2v_3$
forms a path on $4$ vertices.
In this graph, we observe that $x_{v_1}=x_{v_3} > x_u$,
contradicting with the maximality of $x_u$.

In the latter case $A_0 \neq \varnothing$, we have $|A|\ge 4$
and $e(B)=0$.
Since $d(w)\ge 2$ for every $w\in B$, we get $m \ge 9$.
Moreover, each vertex of $B$ has at most one neighbor in $A_+$,
and has at least one neighbor in $A_0$.
In addition, inequality (\ref{eq-6}) reduces to
\begin{equation} \label{eq-7}
\sum_{v\in A_0} x_v \le x_u.
\end{equation}
Denote by $A_+= \{v_1,v_2,v_3\}$ the vertex set of the copy of $K_3$
in $G[A]$. We write $N(A_+)$ for the set of vertices which are adjacent to a vertex of $A_+$.
Note that
$ \lambda x_u = x_{v_1} + x_{v_2} +x_{v_3} + \sum_{v\in A_0} x_v $.
Using (\ref{eq-7}), we obtain
\begin{align}  \notag
\lambda^2 x_u &= \lambda (x_{v_1} +x_{v_2} + x_{v_3}) +
\lambda \sum_{v\in A_0}  x_v \\
&\le
3x_u + 2(x_{v_1} +x_{v_2} + x_{v_3}) + \sum_{w \in N(A_+)\cap B} x_w + \lambda x_u.  \label{eq-8}
\end{align}
If $N(A_+) \cap B = \varnothing$,
then $\lambda ^2 x_u <3 x_u + 6x_u + \lambda x_u$,
which yields $\lambda^2- \lambda < 9$.
Since $B \neq \varnothing$ and $e(B)=0$, we have $N_G(w)=N_A(w)=N_{A_0}(w)$ for every $w\in B$. 
Combining with $d(w)\ge 2$,
we get $|A_0|\ge 2$ and then $m\ge 10$.
Therefore, we obtain $\lambda^2- \lambda <m-1 $,
which contradicts with $\lambda \ge \frac{1+\sqrt{4m-3}}{2}$. 

Next, we assume that $N(A_+) \cap B \neq \varnothing$.
Since $G$ is $F_2$-free, we see that each vertex $w\in B$
has at most one neighbor in $A_+$.
Then $\sum_{w\in  N(A_+)\cap B} x_w < e(A_+,B)x_u \le |B|x_u$.
Recall that $e(B)=0$ and each vertex of $B$ has at least two neighbors in
$A$. Then
$m=|A| +3 + e(A,B) \ge |A| +3 +2|B|$,
which implies $|B| \le \lfloor \frac{m-|A|-3}{2} \rfloor$.
Combining with (\ref{eq-8}), we get
\begin{equation}  \label{eq-xiazheng}
\lambda^2 < 9 + |B| + \lambda \le
9 + \left\lfloor \frac{m-|A|-3}{2} \right\rfloor + \lambda.
\end{equation}
Recall  that $|A|\ge 4$.
If $m\ge 12$, then $\lambda^2 - \lambda < 9+
\lfloor \frac{m-7}{2} \rfloor \le m-1$.
This contradicts with the assumption $\lambda \ge \frac{1+\sqrt{4m-3}}{2}$.
Next, we consider the case $9\le m\le 11$.

\begin{figure}[H]
\centering
\includegraphics[scale=0.85]{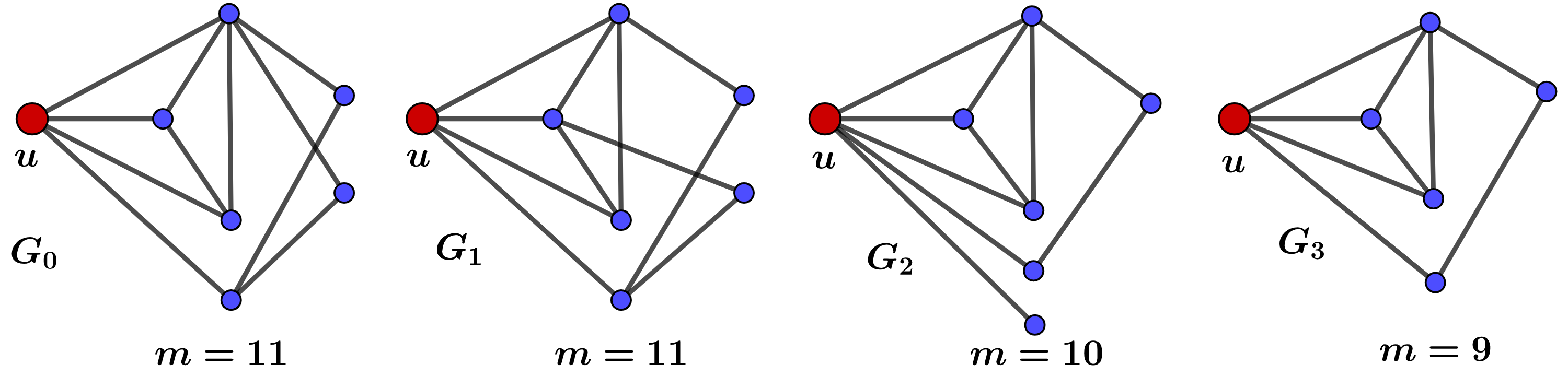}
\caption{The graphs $G_0,G_1,G_2$ and $G_3$.}
\label{fig-3}
\end{figure}

For the case $m=11$, observe that $|B| \le \lfloor \frac{m-7}{2}\rfloor =2$. If $|B|=1$, then it follows from (\ref{eq-xiazheng}) that
$\lambda^2 < 9 +1+ \lambda$,
and so $\lambda < \frac{1+\sqrt{41}}{2}$, a contradiction.
If $|B|=2$, then $G$ is represented as $G_0$ or $G_1$ in Figure  \ref{fig-3}.
By a direct calculation, we can verify that $\lambda (G_0) \approx 3.408< \frac{1+\sqrt{41}}{2} \approx 3.701$, 
and $\lambda (G_1) \approx
3.385 < \frac{1+\sqrt{41}}{2} \approx 3.701$,
a contradiction. 
For the case $m=10$ and $m=9$, the graph $G$ is determined as
$G_2$ and $G_3$, respectively in Figure \ref{fig-3}.
By calculations, we have $\lambda (G_2)\approx 3.315 < \frac{1+\sqrt{37}}{2} \approx 3.541$, a contradiction.
Moreover, we have $\lambda (G_3) \approx 3.236 < \frac{1+\sqrt{33}}{2} \approx 3.372$, a contradiction.
\end{proof}

\section{Concluding remarks}

\label{sec5}

In Conjecture \ref{conj-m-Fk},
we proposed a problem for $F_k$-free graphs with given size.
On the other hand, we would like to mention
the following conjecture involving consecutive cycles.
The extremal graphs in Conjectures  \ref{conj-m-Fk}
and \ref{conj-ZLS}  are surprisingly the same.

\begin{conjecture}[Zhai--Lin--Shu \cite{ZLS2021}, 2021]
\label{conj-ZLS}
Let $k$ be a fixed positive integer and $G$ be a graph of sufficiently large size $m$ without
isolated vertices. If $\lambda(G)\ge \frac{k-1+\sqrt{4m-k^2+1}}{2 }$, then $G$ contains a cycle of length $t$ for every $t \le 2k + 2$, unless $G= K_k\vee I_{\frac{1}{k}\left(m-{k \choose 2} \right)}$.
\end{conjecture}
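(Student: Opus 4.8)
The plan is to establish \emph{Conjecture~\ref{conj-ZLS}} in a quantitative, stability-flavoured form. Normalising the hypothesis, $\lambda:=\lambda(G)\ge \frac{k-1+\sqrt{4m-k^2+1}}{2}$ is equivalent to
\[
\lambda^2-(k-1)\lambda\ge m-{k\choose 2},
\]
which is precisely the defining equation of $\lambda(K_k\vee I_{\frac{1}{k}(m-{k\choose 2})})$. Under this assumption I would show $G$ must contain $C_t$ for every $t\le 2k+2$ unless $G=K_k\vee I_{\frac{1}{k}(m-{k\choose 2})}$. First come the cheap reductions used in the proof of Theorem~\ref{thm-m-F2}: delete isolated vertices, note $G$ is connected (otherwise transplant an edge onto the component realising the spectral radius), and secure a minimum-degree bound $\delta(G)\ge 2$ by the edge-switching trick. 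Since $k\ge 2$ and $m$ is large, the threshold strictly exceeds $\sqrt m$, so by the Nosal--Nikiforov bound (Theorem~\ref{thmnosal}) $G$ is not triangle-free; thus $C_3\subseteq G$ at once.

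The structural heart is an eigenvector analysis. Let $\mathbf{x}$ be a Perron vector, $u$ a vertex with $x_u$ maximal, $A=N(u)$, and $B=V(G)\setminus(A\cup\{u\})$. The walk identity
\[
\lambda^2 x_u=|A|x_u+\sum_{v\in A}d_A(v)x_v+\sum_{w\in B}d_A(w)x_w,
\]
the crude bound $x_v,x_w\le x_u$, and $m=|A|+e(A)+e(A,B)+e(B)$ give $\lambda^2\le m+e(A)-e(B)$, hence $e(A)\ge (k-1)\lambda-{k\choose 2}+e(B)$. Because $\lambda\sim\sqrt m\to\infty$, the neighbourhood $G[A]$ is dense. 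The next, harder step is to upgrade density into structure: replacing $x_v\le x_u$ by an eigenvector-weighted count and iterating on the vertices of largest weight, I would show that $G$ contains $k$ vertices sharing a common neighbourhood of size $\Theta(m/k)$, i.e.\ a subgraph close to $K_k\vee I_{\text{large}}$, and that every edge or vertex violating this pattern is charged against the slack in the inequality above. This stability step is what pins $G$ down to either the exact extremal graph or a near-extremal graph carrying only a bounded amount of extra structure.

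Granting such a core $K_k\vee I_{N'}$ with $N'\ge k$, the short cycles are automatic: this subgraph is pancyclic up to length $2k$ (for $3\le t\le 2k$ take $\lceil t/2\rceil$ clique vertices and insert $\lfloor t/2\rfloor$ independent vertices between consecutive clique vertices, trading clique edges for detours to fix parity), so $C_3,\dots,C_{2k}\subseteq G$. The decisive content is therefore the two lengths $2k+1$ and $2k+2$, which are \emph{exactly} the cycles that $K_k\vee I_N$ itself lacks. Here I would exploit $G\neq K_k\vee I_N$ to locate one nonconforming feature---an edge inside the independent part, a vertex of $B$ with two neighbours in the core, or a $(k+1)$-st vertex attached to the common neighbourhood---and splice it into an alternating $2k$-cycle to lengthen it by one or two vertices.

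\textbf{The main obstacle.} The crux, and the reason the statement is still open, is making this final step sharp for \emph{all} large $m$ rather than merely asymptotically: one must prove a tight spectral bound for graphs avoiding $C_{2k+1}$ (respectively $C_{2k+2}$), with the equality case characterised as precisely $K_k\vee I_N$. Equivalently, one has to show that the minimal extra structure needed to push $\lambda$ up to the threshold unavoidably creates these long cycles, ruling out every near-extremal impostor. Controlling the Perron weights on $B$ finely enough to waste no slack---so that the characterisation is exact---is where the argument is genuinely hard, and is the part I do not expect to dispatch with the routine switching and double-counting estimates that settle the short cycles.
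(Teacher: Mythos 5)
You were asked to prove a statement that the paper itself does not prove: Conjecture~\ref{conj-ZLS} is quoted from Zhai--Lin--Shu and recorded in the concluding remarks as an open problem, so there is no proof in the paper to compare yours against, and the only question is whether your proposal closes the conjecture. It does not, as you yourself concede. The preliminary material is sound: the hypothesis is indeed equivalent to $\lambda^2-(k-1)\lambda\ge m-{k\choose 2}$, which $K_k\vee I_{\frac{1}{k}(m-{k\choose 2})}$ satisfies with equality; the walk identity together with $m=|A|+e(A)+e(A,B)+e(B)$ does give $e(A)\ge (k-1)\lambda-{k\choose 2}+e(B)$; and $K_k\vee I_N$ does contain $C_t$ for all $3\le t\le 2k$ while containing no $C_{2k+1}$ or $C_{2k+2}$. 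But precisely because the exceptional graph contains every cycle of length at most $2k$, all the cycles you actually produce carry none of the conjecture's content: the conjecture \emph{is} the assertion about the two lengths $2k+1$ and $2k+2$ together with the uniqueness of $K_k\vee I_N$, and that is exactly what your plan defers.

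Concretely, two steps are missing, and they are the open problem. First, $e(A)\ge(k-1)\lambda-O(1)$ is only $\Theta(\sqrt m)$ edges inside a neighbourhood that may itself have $\Theta(\sqrt m)$ vertices, so ``$G[A]$ is dense'' overstates what you have; the promotion of this to $k$ vertices with a common neighbourhood of size $\Theta(m/k)$, with every deviation charged against the slack (``iterating on the vertices of largest weight''), is an assertion with no argument behind it. In the solved analogues --- books and theta graphs in \cite{ZL2022jgt}, or Theorem~\ref{thm-m-F2} in this paper for the bowtie, which is morally the $k=2$ instance --- this local-structure step is the bulk of the proof and is executed by delicate, problem-specific case analysis, not by a generic iteration. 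Second, your final splicing step presupposes the exact equality characterization you never establish. One further concrete slip: the statement allows $k=1$, where the threshold equals $\sqrt m$ and is attained by \emph{every} complete bipartite graph (Theorem~\ref{thmnosal}); e.g.\ $K_{2,m/2}$ is triangle-free, attains the bound, and is not $K_1\vee I_m=K_{1,m}$, so your opening Nosal step (``the threshold strictly exceeds $\sqrt m$'') tacitly assumes $k\ge 2$, and any complete treatment must either restrict $k$ or dispose of these equality cases. In sum: a reasonable research programme whose routine parts are correct, but with the decisive stability-and-characterization step absent it is not a proof --- consistently with the statement's status in the paper as a conjecture.
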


In this section, we will conclude more  spectral extremal problems involving 
the friendship graph. 
Recall that Nosal's theorem \cite{Nosal1970} asserts that
every $m$-edge graph $G$ with $\lambda (G)> \sqrt{m}$
must contain a triangle. Moreover, a theorem of Nikiforov \cite{Niki2007laa2} implies that
every $n$-vertex graph $G$ with $\lambda (G) > \lambda (K_{\lceil \frac{n}{2} \rceil, \lfloor \frac{n}{2} \rfloor}) $ has a triangle.
In 2023, Ning and Zhai \cite{NZ2021} generalized these theorems and
proved two counting results of triangles,
which states that under the same assumption on spectral radius,
the graph $G$ contains not only one triangle, but also
at least $\lfloor \frac{\sqrt{m}-1}{2} \rfloor$
and $\lfloor \frac{n}{2}\rfloor -1$ triangles.
In this paper, we have determined
the largest spectral extremal graphs of the bowtie  
for graphs with given order and size, respectively. 
A natural question one may ask is that how many copies of
the bowtie must have in a graph with the spectral radius larger than
that of the extremal graph in Theorems \ref{thm-n-F2} or  \ref{thm-m-F2}.  
In fact, the edge supersaturation for the bowtie was studied by Kang, Makai and
Pikhurko \cite{KMP2020}. Roughly speaking, 
they proved that if $n$ is large enough and $q=o(n^2)$, 
then the only way to construct a graph on $n$ vertices with $\mathrm{ex}(n,F_2) +q$ edges 
and containing as few bowties as possible is to add $q-1$ edges to $K_{\lceil \frac{n}{2} \rceil, \lfloor \frac{n}{2} \rfloor}$. 
Hence it is also interesting to establish the spectral analogues 
for the bowtie. 
Let $K_{\lceil \frac{n}{2} \rceil, \lfloor \frac{n}{2} \rfloor}^{2K_2}$ 
be the graph obtained from $K_{\lceil \frac{n}{2} \rceil, \lfloor \frac{n}{2} \rfloor}$ by embedding two disjoint edges $2K_2$ 
into the part of size $\lceil \frac{n}{2} \rceil$. 

\begin{problem}
If $G$ is an $n$-vertex graph with 
$\lambda (G) \ge \lambda (K_{\lceil \frac{n}{2} \rceil, \lfloor \frac{n}{2} \rfloor}^{2K_2})$, 
then $G$ contains at least $\lfloor \frac{n}{2} \rfloor$ copies of bowties, 
and $K_{\lceil \frac{n}{2} \rceil, \lfloor \frac{n}{2} \rfloor}^{2K_2}$ is the unique spectral extremal graph. 
\end{problem}

\medskip
The definition of the spectral radius was recently extended
to the $p$-spectral radius; see \cite{KLM2014, KN2014,Niki2014laa} and references therein.
We denote the $p$-norm of $\bm{x}$ by
$ \lVert \bm{x}\rVert_p
=(\sum_{i=1}^n |x_i|^p )^{1/p}$.
For every real number $p\ge 1$,
the {\it $p$-spectral radius} of $G$ is defined as
\begin{equation*} \label{psp}
\lambda^{(p)} (G) : = 2 \max_{\lVert \bm{x}\rVert_p =1}
\sum_{\{i,j\} \in E(G)} x_ix_j.
\end{equation*}
We remark that $\lambda^{(p)}(G)$ is a versatile parameter.
Indeed, $\lambda^{(1)}(G)$ is known as the Lagrangian function of $G$,
$\lambda^{(2)}(G)$ is the spectral radius of its adjacency matrix,
and
\begin{equation}  \label{eqlimit}
\lim_{p\to +\infty} \lambda^{(p)} (G)=2e(G).
\end{equation}

The extremal  function for $p$-spectral radius is given as
\[  \mathrm{ex}_{\lambda}^{(p)}(n, {F}) :=
\max\{ \lambda^{(p)} (G) : |G|=n ~\text{and $G$ is ${F}$-free}  \}.  \]
To some extent, the proof of  results on the $p$-spectral radius shares some similarities with the usual spectral radius when $p>1$;
see \cite{KNY2015} for more extremal problems.
In 2014, Kang and Nikiforov \cite{KN2014} extended the Tur\'{a}n theorem
to the $p$-spectral version for $p>1$.
They proved that
$ \mathrm{ex}_{\lambda}^{(p)} (n,K_{r+1}) =\lambda^{(p)}(T_r(n))$,
and the unique extremal graph is the $r$-partite Tur\'{a}n graph $T_r(n)$.
This result unifies the classical Tur\'{a}n theorem and Nikiforov's theorem by the fact (\ref{eqlimit}).

Recall that $F_k$ is the graph consisting of $k$ triangles
by intersecting a common vertex.
Motivated by the above-mentioned results, we propose the following
extremal  problems in terms of the $p$-spectral radius for $F_k$-free graphs. 
In particular, we make the following problem on the case $k=2$.

\begin{problem}
If $p>1$ and $G$ is an $F_2$-free graph of large order $n$, then
\[  \lambda^{(p)} (G) \le \lambda^{(p)} (K_{ \lfloor \frac{n}{2} \rfloor, \lceil \frac{n}{2} \rceil}^+), \]
equality holds if and only if $G=K_{ \lfloor \frac{n}{2} \rfloor, \lceil \frac{n}{2} \rceil}^+$.
\end{problem} 

More generally, we can propose the following problem. 

\begin{problem}
Let $F$ be a graph with $\mathrm{ex}(n,F)=e(T_r(n)) + O(1)$. 
Let $p>1$ be a fixed real number and $n$ be a sufficiently large integer. 
If $G$ has the maximal $p$-spectral radius over all $n$-vertex $F$-free graphs, then 
$G\in \mathrm{Ex}(n,F)$. 
\end{problem}

There is a rich history on the study of bounding
the eigenvalues of a graph in terms of
various parameters and matrices.
Apart from the study of adjacency matrices,
it is also worth mentioning that the signless Laplacian matrices were  widely investigated over the years.
Recall that  $S_{n,k}=K_k \vee I_{n-k}$.
Clearly, we can see that
$S_{n,k}$  does not contain $F_k$ as a subgraph.
In 2021, Zhao, Huang and Guo \cite{ZHG21}
proved that $S_{n,k}$ is
the unique graph attaining the maximum signless Laplacian spectral radius among all $F_k$-free graphs of order $n\ge 3k^2-k-2$.
For  related extensions, we infer the readers to \cite{CLZ2021}
for intersecting odd cycles, \cite{WZ2022dm} for fan graphs.
For more other results, see, e.g., the $K_{r+1}$-free graphs \cite{HJZ2013},
the $C_{2k+1}$-free graphs \cite{Yuan2014} and
the $C_{2k}$-free graphs \cite{NY2015}.

\subsection*{Acknowledgements}
This work was supported by  NSFC (Grant No. 11931002 and 12001544),
and Natural Science Foundation of Hunan
Province (No. 2021JJ40707). 
The authors would like to thank Dr. Xiaocong He for carefully reading an early manuscript of this paper.

\frenchspacing

\end{document}